\definecolor{LightBlue}{rgb}{0,0.8,1} 
\definecolor{Turquoise}{rgb}{0,0.7,0.4}
\definecolor{NewBlue}{rgb}{0,0.3,0.8}
\definecolor{Pink}{RGB}{255,0,255}
\definecolor{BlueGreen}{RGB}{0,255,120}
\crefname{conjecture}{Conjecture}{Conjectures}
\newtheorem{theorem}{Theorem}[section]
\newtheorem{proposition}[theorem]{Proposition}
\newtheorem{conjecture}[theorem]{Conjecture}
\newtheorem{question}[theorem]{Question}
\newtheorem{problem}[theorem]{Problem}
\newtheorem{lemma}[theorem]{Lemma}
\theoremstyle{definition}
\newtheorem{definition}[theorem]{Definition}
\newtheorem{remark}[theorem]{Remark}
\newtheorem{example}[theorem]{Example}
\newcommand{\row}{\mathrm{Row}}
\newcommand{\Ech}{\operatorname{Ech}} 
\newcommand{\DD}{\mathcal{D}}
\newcommand{\UU}{\mathcal{U}}
\newcommand{\Pre}{\mathrm{Pre}}
\newcommand{\Suc}{\mathrm{Suc}} 
\newcommand{\CC}{\mathbb{C}} 
\newcommand{\rr}{\varrho}
\newcommand{\bb}{\delta} 
\newcommand{\Cov}{\mathrm{Cov}} 
\newcommand{\popdown}{\mathrm{Pop}} 
\newcommand{\rk}{\mathrm{rk}} 
\newcommand{\JJ}{\mathcal{J}}
\newcommand{\MM}{\mathcal{M}} 
\newcommand{\Odown}{\Upsilon}
\newcommand{\rank}{\mathrm{rank}} 
\newcommand{\jj}{\mathfrak{j}} 
\newcommand{\w}{\mathbf{w}} 
\newcommand{\lex}{\mathrm{lex}} 
\newcommand{\C}{\mathcal{C}}
\newcommand{\dfn}[1]{\textcolor{NewBlue}{\emph{#1}}}
\begin{document}

\title[]{Rowmotion and Echelonmotion}  
\subjclass[2010]{}

\author[]{Colin Defant}
\address[]{Department of Mathematics, Harvard University, Cambridge, MA 02138, USA}
\email{colindefant@gmail.com} 

\author[]{Yuhan Jiang}
\address[]{Department of Mathematics, Harvard University, Cambridge, MA 02138, USA}
\email{yhj1761@gmail.com}

\author[]{Rene Marczinzik}
\address[]{Mathematical Institute of the University of Bonn, Endenicher Allee 60, 53115 Bonn, Germany}
\email{marczire@math.uni-bonn.de}

\author[]{Adrien Segovia}
\address[]{Lacim, UQAM, Montréal, QC, H3C 3P8 Canada}
\email{adrien.segovia@gmail.com} 

\author[]{David E Speyer}
\address[]{Department of Mathematics, University of Michigan, 
Ann Arbor, MI, 48109, USA}
\email{speyer@umich.edu} 

\author[]{Hugh Thomas}
\address[]{Lacim, UQAM, Montréal, QC, H3C 3P8 Canada}
\email{thomas.hugh\_r@uqam.ca}

\author[]{Nathan Williams}
\address[]{Department of Mathematical Sciences, University of Texas at Dallas, Richardson, TX, 75080, USA}
\email{nathan.f.williams@gmail.com}

\begin{abstract}
Given a linear extension $\sigma$ of a finite poset $R$, we consider the permutation matrix indexing the Schubert cell containing the Cartan matrix of $R$ with respect to $\sigma$. This yields a bijection $\mathrm{Ech}_\sigma\colon R\to R$ that we call \emph{echelonmotion}; it is the inverse of the Coxeter permutation studied by Kl\'asz, Marczinzik, and Thomas. Those authors proved that echelonmotion agrees with rowmotion when $R$ is a distributive lattice. We generalize this result to semidistributive lattices. In addition, we prove that every trim lattice has a linear extension with respect to which echelonmotion agrees with rowmotion. We also show that echelonmotion on an Eulerian poset (with respect to any linear extension) is an involution. Finally, we initiate the study of \emph{echelon-independent posets}, which are posets for which echelonmotion is independent of the chosen linear extension. We prove that a lattice is echelon-independent if and only if it is semidistributive. Moreover, we show that echelon-independent connected posets are bounded and have semidistributive MacNeille completions. 
\end{abstract} 

\maketitle

\section{Introduction}

\subsection{Rowmotion} 
Let $Q$ be a finite poset, and consider the distributive lattice $J(Q)$ of lower order ideals of $Q$, partially ordered by containment. For $X\subseteq Q$, let \[\nabla_Q(X)=\{q\in Q: q\geq x \text{ for some }x\in X\}\] be the upper order ideal generated by $X$. \dfn{Rowmotion} is the bijection $\row_{J(Q)}\colon J(Q)\to J(Q)$ defined by 
\[\row_{J(Q)}(I)=Q\setminus\nabla_Q(\max(I)),\] where $\max(I)$ is the set of maximal elements of $I$. Birkhoff's representation theorem states that every finite distributive lattice is isomorphic to the lattice of lower order ideals of a finite poset, so one can define a bijective rowmotion operator on any distributive lattice.  The left-hand side of \cref{fig:distributive_rowmotion} shows the action of rowmotion on the lower order ideals of a $3$-element poset.  

Rowmotion has been discovered and rediscovered in several contexts, including matroid theory~\cite{deza1990loops} and quiver representation theory~\cite{IM,MTY24}.  It has been studied extensively in dynamical algebraic combinatorics~\cite{Panyushev,SW12,AST13,Roby,B19,TW19,HopkinsOPAC}. 

\subsection{Echelonmotion} We are interested in a new incarnation of rowmotion that Kl\'asz, Marczinzik, and Thomas recently discovered while studying the grade bijection for Auslander--Gorenstein algebras \cite{KMT25}. This new interpretation uses the Bruhat decomposition of the general linear group $\mathrm{GL}_n(\mathbb C)$. Let $B$ be the set of upper-triangular invertible complex matrices. Recall that 
\begin{equation}\label{eq:bruhat}\mathrm{GL}_n(\mathbb C)=\bigsqcup_{P\in \mathfrak S_n}BPB,\end{equation} where $\mathfrak S_n$ denotes the symmetric group on $n$ letters, which we identify with the set of $n\times n$ permutation matrices. (See \cite{OOV} for more details.) 

Let $R$ be an $n$-element poset, and let $[n]=\{1,\ldots,n\}$.  A \dfn{linear extension} of $R$ is a bijection $\sigma\colon R\to [n]$ such that $\sigma(x)\leq\sigma(y)$ for all $x,y\in R$ satisfying $x\leq y$. Fix a linear extension $\sigma$ of $R$. The \dfn{Cartan matrix} of $R$ with respect to $\sigma$ is the $n\times n$ matrix $W^{R,\sigma}$ whose entry in row $i$ and column $j$ is 
\[W^{R,\sigma}_{i,j}=\begin{cases}
    1 & \text{if }\sigma^{-1}(i)\geq \sigma^{-1}(j) \\
    0 & \text{if }\sigma^{-1}(i)\not\geq \sigma^{-1}(j). 
\end{cases}\]
Because $W^{R,\sigma}$ is lower-triangular with 1's on its diagonal, it is invertible---so by~\eqref{eq:bruhat}, there is a unique permutation matrix $P^{R,\sigma}\in\mathfrak S_n$ such that $W^{R,\sigma}\in BP^{R,\sigma}B$. Using $\sigma$, we can view $P^{R,\sigma}$ as a bijection from $R$ to itself.

\begin{definition}\label{def:echelonmotion} 
Let $R$ be a finite poset. Given a linear extension $\sigma$ of $R$, we define \dfn{echelonmotion} with respect to $\sigma$ to be the bijection $\Ech_\sigma\colon R\to R$ such that $\Ech_\sigma(x)=y$ if and only if $P^{R,\sigma}$ has a $1$ in row $\sigma(y)$ and column $\sigma(x)$.
\end{definition} 

We say a poset $R$ is \dfn{echelon-independent} if $\Ech_\sigma=\Ech_{\sigma'}$ for all linear extensions $\sigma,\sigma'$ of~$R$.  Kl\'asz, Marczinzik, and Thomas \cite{KMT25} proved that if $R=J(Q)$ is a distributive lattice, then $\Ech_\sigma=\row_R$ for any linear extension $\sigma$ of $R$.\footnote{They actually worked with the bijection $\Ech_\sigma^{-1}$, which they called the \dfn{Coxeter permutation}. This name comes from the fact that it corresponds to the permutation matrix appearing in the Bruhat decomposition of the \emph{Coxeter matrix} $-(W^{R,\sigma})^{-1}(W^{R,\sigma})^\top$. The definition of rowmotion used in \cite{KMT25} is the inverse of the one we use here.} In particular, this shows that distributive lattices are echelon-independent. 

\begin{figure}[htbp]
  \begin{center}
  \includegraphics[height=4.768cm]{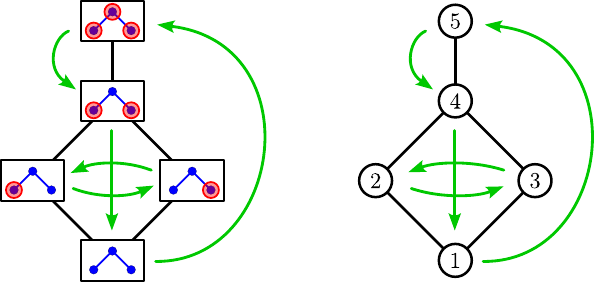}
  \end{center}
\caption{On the left is the distributive lattice of lower order ideals of a $3$-element poset, where each lower order ideal is represented as a collection of elements shaded in red. On the right is the same lattice labeled according to a linear extension. In each depiction of the lattice, a green arrow is drawn from an element to its image under rowmotion.  }\label{fig:distributive_rowmotion}
\end{figure}

\begin{example}\label{exam:distributive_rowmotion} 
Let $Q$ be the $3$-element poset $\begin{array}{l}\includegraphics[height=0.468cm]{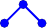}\end{array}$\!. The left side of \cref{fig:distributive_rowmotion} shows the lattice $R=J(Q)$, with the action of rowmotion represented by green arrows. The right side of the same figure shows the same distributive lattice with elements labeled by a linear extension $\sigma$. The Cartan matrix and the permutation matrix in its Bruhat decomposition are 
\[W^{R,\sigma}=\left(\begin{array}{ccccc}
1 & 0 & 0 & 0 & 0 \\
1 & 1 & 0 & 0 & 0 \\
1 & 0 & 1 & 0 & 0 \\ 
1 & 1 & 1 & 1 & 0 \\ 
1 & 1 & 1 & 1 & 1 \end{array} \right)\quad\text{and}\quad P^{R,\sigma}=\left(\begin{array}{ccccc}
0 & 0 & 0 & 1 & 0 \\
0 & 0 & 1 & 0 & 0 \\
0 & 1 & 0 & 0 & 0 \\ 
0 & 0 & 0 & 0 & 1 \\ 
1 & 0 & 0 & 0 & 0 \end{array}\right).\] Indeed, this follows from the factorization 
\[\left(\begin{array}{ccccc}
1 & 0 & 0 & 0 & 0 \\
1 & 1 & 0 & 0 & 0 \\
1 & 0 & 1 & 0 & 0 \\ 
1 & 1 & 1 & 1 & 0 \\ 
1 & 1 & 1 & 1 & 1 \end{array} \right)=\left(\begin{array}{ccccc}
1 & 1 & 1 & 1 & 1 \\
0 & 1 & 0 & 1 & 1 \\
0 & 0 & 1 & 1 & 1 \\ 
0 & 0 & 0 & 1 & 1 \\ 
0 & 0 & 0 & 0 & 1 \end{array}\right)\left(\begin{array}{ccccc}
0 & 0 & 0 & 1 & 0 \\
0 & 0 & 1 & 0 & 0 \\
0 & 1 & 0 & 0 & 0 \\ 
0 & 0 & 0 & 0 & 1 \\ 
1 & 0 & 0 & 0 & 0 \end{array}\right)\left(\begin{array}{ccccc}
1 & 1 & 1 & 1 & 1 \\
0 & -1 & 0 & -1 & 0 \\
0 & 0 & -1 & -1 & 0 \\ 
0 & 0 & 0 & 1 & 0 \\ 
0 & 0 & 0 & 0 & -1 \end{array}\right).\]
Note that the permutation matrix $P^{R,\sigma}$, viewed as a permutation of $R$, agrees with rowmotion. 
\end{example}

There has been interest in recent years in extending the definition of rowmotion to larger families of posets besides distributive lattices \cite{B19,DW23,TW19,TW20}. Echelonmotion provides such a generalization that is quite broad in the sense that it is defined for any finite poset with respect to any linear extension. Our aim is to study echelonmotion for several notable families of posets. 

\subsection{Main Results}

\emph{All posets in this article are assumed to be finite.}  

Semidistributive lattices generalize distributive lattices. They have a representation theorem that mimics Birkhoff's representation theorem for distributive lattices \cite{RST21}. Notable examples of semidistributive lattices include intervals in weak order on Coxeter groups \cite{BB,ReadingFiniteCoxeter}, the facial weak orders of simplicial hyperplane arrangements \cite{DHMP}, Cambrian lattices \cite{ReadingCambrian}, $\nu$-Tamari lattices \cite{nuTamariCombinatorica,PRV}, framing lattices \cite{Framing}, and lattices of torsion classes of finite-dimensional algebras \cite{DIRRT}. 
Barnard \cite{B19} found a natural definition of a rowmotion operator ${\row_L\colon L\to L}$ when $L$ is a semidistributive lattice (see \cref{sec:semidistributive} for the definition). We will use this definition to obtain the following generalization of the aforementioned result from~\cite{KMT25}. 

\begin{theorem}\label{semidist-thm} 
Let $L$ be a semidistributive lattice. We have \[\Ech_{\sigma}=\row_L\] for every linear extension $\sigma$ of $L$. In particular, $L$ is echelon-independent.  Furthermore, every echelon-independent lattice is semidistributive. 
\end{theorem}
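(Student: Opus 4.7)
The plan for proving $\Ech_\sigma = \row_L$ (which immediately gives echelon-independence of $L$) is to use the rank-function characterization of Bruhat cells: the Bruhat cell of an invertible matrix $A \in \mathrm{GL}_n(\CC)$ is determined by the ranks of its northwest-corner submatrices, which must match the corresponding counts of $1$s of the Bruhat permutation $P$. Under this characterization, the statement $\Ech_\sigma = \row_L$ translates into the rank identity
\[
\rk\bigl(W^{L,\sigma}_{\{1,\ldots,i\},\{1,\ldots,j\}}\bigr) \;=\; \#\bigl\{x \in L : \sigma(\row_L(x)) \leq i \text{ and } \sigma(x) \leq j\bigr\}
\]
for all $i,j \in [n]$. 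The left-hand side is the rank of the $\geq$-incidence matrix between the $\sigma$-prefixes $\sigma^{-1}(\{1,\ldots,i\})$ and $\sigma^{-1}(\{1,\ldots,j\})$ of $L$, so the identity becomes a purely combinatorial statement inside $L$.

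To establish this identity I would exhibit an explicit factorization $W^{L,\sigma} = U_1 \cdot P \cdot U_2$, where $P$ is the permutation matrix of $\sigma \circ \row_L \circ \sigma^{-1}$ and $U_1, U_2$ are unipotent upper-triangular matrices whose entries are read off from the canonical join representations $\DD(x)$ and canonical meet representations $\UU(x)$ that semidistributivity guarantees (via the Reading--Stern--Thomas representation theorem). Barnard's description of $\row_L$ via the bijection $\kappa$ between join- and meet-irreducibles---for instance $\row_L(x) = \bigwedge_{j \in \DD(x)} \kappa(j)$---is the input telling us which column of $U_1$ and which row of $U_2$ should carry the canonical decomposition of each element. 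The upper-triangularity of $U_1$ and $U_2$ should be automatic because every lattice comparison appearing in these formulas is respected by the linear extension $\sigma$; once this is in place, the permutation part of the factorization is forced to equal $P$. Since the right-hand side of the rank identity is independent of $\sigma$, the echelon-independence assertion drops out.

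For the converse, my plan is to argue by contrapositive. If $L$ is not semidistributive, then (up to dualizing) some triple $a,b,c \in L$ satisfies $a \vee b = a \vee c > a \vee (b \wedge c)$. I would construct two linear extensions $\sigma, \sigma'$ of $L$ that differ by a single adjacent transposition placed inside such a failing configuration, and show $P^{L,\sigma} \neq P^{L,\sigma'}$ as bijections $L \to L$. Swapping two consecutive values in a linear extension conjugates $W^{L,\sigma}$ by an adjacent transposition, and the resulting change in the Bruhat cell is controlled by a single $2 \times 2$ minor; the failure of semidistributivity is exactly what forces that minor to have different ranks before and after the swap, moving the Bruhat permutation between two distinct cells.

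The main obstacle I anticipate is proving the rank identity in the forward direction. Semidistributive lattices carry all the combinatorial structure needed to describe $\row_L$, but packaging this into an explicit LU-type factorization of $W^{L,\sigma}$---and verifying that the off-diagonal cancellations match the Möbius-like relations among $\DD(x)$, $\UU(x)$, and $\kappa$---is where the real work lies. Once that factorization is in hand, echelon-independence is a corollary, and the converse should reduce to a short local analysis of how Bruhat cells change under adjacent transpositions of a linear extension.
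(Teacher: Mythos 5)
Your plan differs fundamentally from the paper's, and the core step is not carried out. The paper never constructs any explicit $W=U_1PU_2$ factorization. Instead, it proves a one-sided rank inequality $\sigma(\Ech_\sigma(x))\le\sigma(\max_\sigma(\Odown_L(x)))$ by exhibiting \emph{one} linear relation among columns, namely the map $\rr$ equal to the M\"obius function of the interval $Q_x=[\popdown_L(x),x]$ (this is \cref{prop:new}/\cref{cor:alpha}). It then upgrades the inequality to equality by observing that $\max_\sigma(\Odown_L(x))=\row_L(x)$ is a bijection (by \cite[equation~\eqref{eq:pop=row}]{DW23}) and running a simple induction on $\sigma(\row_L(x))$. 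This route requires only one weighting per element rather than a full unipotent triangular factor, and it cleanly uses \cref{lem:Mobius_nonzero} to ensure the leading coefficient is nonzero.

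Your proposal leaves a genuine gap exactly where you flag it: ``packaging this into an explicit LU-type factorization'' is not a routine matter of reading entries off $\DD(x)$, $\UU(x)$, and $\kappa$. Even in the worked distributive example in the paper, the triangular factors $U_1,U_2$ have entries in $\{0,\pm 1\}$ whose pattern is not simply the canonical join or meet representation; making this precise and proving upper-triangularity is the same content as the theorem itself. You also have a convention slip: for $W\in BPB$ with $B$ the group of upper-triangular invertible matrices, the Bruhat cell is characterized by ranks of \emph{lower-left} submatrices $W_{\ge i,\le j}$ (\eqref{eq:ranks}), not northwest corners; the northwest-corner description would apply to $B_-PB_-$. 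The forward rank identity you write down is therefore wrong as stated (though fixable).

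The converse argument is also different from the paper and not established as proposed. The paper does not analyze adjacent transpositions of linear extensions; instead, through \cref{thm:MacNeille}, it uses \cref{SDCondition} to find a join-irreducible $j$ for which $\Odown_L(j)$ has two maximal elements $m,m'$, then builds two linear extensions $\sigma,\sigma'$ with $\Pre_\sigma(j)=\Delta_L(j)$, $\Suc_\sigma(m)=\nabla_L(m)$ (resp.\ for $m'$), and applies \cref{prop:labeling} to get $\Ech_\sigma(j)=m\ne m'=\Ech_{\sigma'}(j)$. Your claim that a \emph{single} adjacent swap changes the Bruhat cell, and that ``the failure of semidistributivity is exactly what forces that minor to have different ranks,'' is plausible in some examples but is not demonstrated, and the two linear extensions needed to witness the failure need not be adjacent in the linear-extension graph. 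You should either locate such an adjacent pair or switch to the paper's witness construction.
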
 

We next turn our attention to trim lattices, which were introduced by Thomas \cite{T06} as generalizations of distributive lattices that need not be graded. Notable examples of trim lattices include distributive lattices, Cambrian lattices \cite{ReadingCambrian}, $\nu$-Tamari lattices \cite{nuTamariCombinatorica,PRV}, and certain lattices of torsion classes of finite-dimensional algebras \cite[Corollary~1.5]{TW19}. Thomas and Williams \cite{TW19} defined a rowmotion operator $\row_L\colon L\to L$ whenever $L$ is trim. When $L$ is both trim and semidistributive, their definition of rowmotion agrees with Barnard's definition and, therefore, coincides with echelonmotion on $L$ (with respect to any linear extension) by \cref{semidist-thm}. When $L$ is trim but not semidistributive, \cref{semidist-thm} tells us that we cannot have $\Ech_\sigma=\row_L$ for \emph{every} linear extension $\sigma$ of $L$. Nonetheless, we will define the notion of a \emph{vertebral} linear extension of a trim lattice (see \cref{def:vertebral}) and prove the following theorem.  

\begin{theorem}\label{thm:trim} 
Let $L$ be a trim lattice. If $\sigma$ is a vertebral linear extension of $L$, then $\Ech_\sigma=\row_L$. 
\end{theorem}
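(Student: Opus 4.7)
The plan is to characterize the permutation $P^{R,\sigma}$ by rank data of rectangular submatrices of $W^{R,\sigma}$ and then match that data against the combinatorics of $\row_L$ using the special structure that a vertebral extension imposes on a trim lattice $L$.

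First, I would invoke the classical rank description of Bruhat cells: a lower-triangular unipotent matrix $W$ lies in $BPB$ if and only if, for all $1 \leq i, j \leq n$,
$$\rank\bigl(W[\geq i,\,\leq j]\bigr) \;=\; \#\bigl\{(i',j') : i' \geq i,\ j' \leq j,\ P_{i',j'}=1\bigr\},$$
where $W[\geq i, \leq j]$ denotes the submatrix with row indices $\{i,\ldots,n\}$ and column indices $\{1,\ldots,j\}$. Applied to $W^{R,\sigma}$, this reduces \cref{thm:trim} to the single rank identity
$$\rank\bigl(W^{R,\sigma}[\geq i,\,\leq j]\bigr) \;=\; \#\bigl\{x \in L : \sigma(x) \leq j \text{ and } \sigma(\row_L(x)) \geq i\bigr\}$$
for all $i,j \in [n]$.

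Second, I would unpack the Thomas--Williams construction of $\row_L$ on a trim lattice, which uses the extremal bijection between join-irreducibles and meet-irreducibles together with canonical join and canonical meet representations. I would then unpack the vertebral condition on $\sigma$: I expect it to say that $\sigma$ linearly extends a filtration by the spine of $L$ in a way that is compatible with canonical-join data, so that the $\sigma$-labels of each element and of its image under $\row_L$ interact predictably with the Galois correspondence on irreducibles. This should endow $W^{R,\sigma}$ with a staircase/block structure aligned with the spine.

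Third, and most importantly, I would prove the rank identity. My preferred route is induction on $|L|$: peel off either the top or the bottom of the vertebral chain, descend to a smaller trim lattice whose inherited linear extension is again vertebral, and track how the deleted row or column of the Cartan matrix affects southwest-submatrix ranks. At the base case and in each inductive step, the vertebral hypothesis should provide just enough control over linear dependencies to pin down pivots cleanly, so that Gaussian elimination produces exactly the rowmotion permutation. The main obstacle is precisely this rank control: for general linear extensions, rank formulas for $W^{R,\sigma}$ become opaque, and the essential content of \cref{thm:trim} is that vertebrality prevents the unexpected collapses which \cref{semidist-thm} tells us must occur for generic $\sigma$ when $L$ fails to be semidistributive. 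Once the rank identity is established, the Bruhat-cell description immediately yields $\Ech_\sigma = \row_L$.
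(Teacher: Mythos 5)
Your opening move---passing from the Bruhat cell containing $W^{R,\sigma}$ to the rank condition on southwest submatrices---is the same starting point the paper uses (it appears as equation~\eqref{eq:ranks} and is repackaged in \cref{prop:labeling,prop:new}). But from there you describe a plan rather than a proof, and the plan is missing the central combinatorial content. The paper does not compute the ranks of $W^{R,\sigma}[\geq i, \leq j]$ directly. Instead it factors the argument through \cref{cor:alpha}: it uses the M\"obius function of the interval $Q_x=[\popdown_L(x),x]$ to manufacture, for each $x$, an explicit linear dependence among the columns of the relevant submatrix (the map $\rr$), which bounds $\sigma(\Ech_\sigma(x))$ above by $\sigma(\max_\sigma(\Odown_L(x)))$; equality is then forced because $x\mapsto\max_\sigma(\Odown_L(x))$ is a bijection. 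The whole weight of \cref{thm:trim} is in showing that for a \emph{vertebral} $\sigma$ one has $\max_\sigma(\Odown_L(x))=\row_L(x)$, i.e.\ equation~\eqref{eq:trim_goal_1}. Your proposal never engages with this identity or with anything that could replace it: you say ``vertebrality prevents the unexpected collapses'' but do not say why, and you do not explain what a vertebral linear extension actually is. (Your guess that it ``linearly extends a filtration by the spine'' is not quite what \cref{def:vertebral} says; a vertebral extension sorts elements by the lexicographic order of the upward-label words $\w_\C$, and the whole comparison argument lives in that word language.)

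The inductive scheme you sketch is also not the one that works. ``Peel off the top or bottom of the vertebral chain and delete a row/column of $W$'' does not correspond to passing to a sub-Cartan-matrix in any usable way, because the elements of a subinterval of $L$ need not be contiguous in $\sigma$. The actual induction in the paper is two-layered: first one passes to the trim interval $L'=[\popdown_L(x),\hat 1]$ and uses \cref{prop:1and5,lem:CC'} to show the restricted chain is again a maximum-length chain inducing the restricted linear extension; once $\popdown_L(x)=\hat 0$, the real work begins, comparing $\w_\C(y)$ with $\w_\C(\row_L(x))$ letter by letter, using \cref{lem:independent_sets} to strip the first common letter $j_r$, and then proving $y'\in\Odown_L(x')$ via independence in the Galois graph and the canonical join/meet representations of \cref{lem:join_and_meet}. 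None of that structure is visible in your outline, and without it there is no reason to believe the rank identity you reduce to is provable by the induction you propose. In short: correct reduction, correct identification of the hard step, but no content toward actually closing it.
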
 

We also consider Eulerian posets, notable examples of which include face lattices of polytopes and intervals of Bruhat order on Coxeter groups. 

\begin{theorem}\label{thm:Eulerian} 
Let $R$ be an Eulerian poset. For every linear extension $\sigma$ of $R$, the map $\Ech_\sigma$ is an involution. 
\end{theorem}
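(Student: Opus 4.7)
The plan is to encode the defining Möbius identity of an Eulerian poset as a symmetry of the Cartan matrix $W = W^{R,\sigma}$ and then appeal to the uniqueness of Bruhat decomposition. Recall that an Eulerian poset is graded---say with rank function $\rho$---and satisfies $\mu(x,y) = (-1)^{\rho(y)-\rho(x)}$ for every pair $x \leq y$.

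First I would identify $W^{-1}$ via Möbius inversion. Unwinding the defining relation $\sum_{x \leq z \leq y}\mu(x,z) = \delta_{x,y}$, one checks directly that
\[(W^{-1})_{i,j} = \mu\!\left(\sigma^{-1}(j),\,\sigma^{-1}(i)\right),\]
with the convention that this is $0$ when $\sigma^{-1}(j) \not\leq \sigma^{-1}(i)$. This step does not yet invoke the Eulerian hypothesis; it holds for any finite poset.

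Next, let $D$ be the diagonal sign matrix defined by $D_{i,i} = (-1)^{\rho(\sigma^{-1}(i))}$. Substituting the Eulerian formula for $\mu$ into the entries of $DW^{-1}D$, one computes
\[(DW^{-1}D)_{i,j} \;=\; (-1)^{\rho(\sigma^{-1}(i))+\rho(\sigma^{-1}(j))}\cdot(-1)^{\rho(\sigma^{-1}(i))-\rho(\sigma^{-1}(j))} \;=\; 1\]
whenever $\sigma^{-1}(j) \leq \sigma^{-1}(i)$, and $0$ otherwise. In other words, $DW^{-1}D = W$.

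Now the argument concludes formally. Since $D$ is diagonal and invertible, $D \in B$. By definition, $W \in B\,P^{R,\sigma}\,B$, so $W^{-1} \in B\,(P^{R,\sigma})^{-1}\,B$, and therefore
\[W \;=\; D\,W^{-1}\,D \;\in\; B\,(P^{R,\sigma})^{-1}\,B.\]
The disjointness of the Bruhat decomposition \eqref{eq:bruhat} forces $P^{R,\sigma} = (P^{R,\sigma})^{-1}$, so $\Ech_\sigma$ is an involution.

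The main obstacle here is conceptual rather than technical: one must recognize that the Eulerian condition, typically phrased as a cancellation in the Möbius function, can be repackaged as the matrix self-duality $W = DW^{-1}D$. Once this identity is in hand, the Bruhat decomposition transports the symmetry directly to the indexing permutation, and no case analysis or combinatorial bookkeeping is needed.
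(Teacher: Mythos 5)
Your argument is correct and is essentially identical to the paper's: both derive $W^{-1}$ via Möbius inversion, introduce the same diagonal sign matrix $D$, establish the conjugacy $W^{-1}=DWD^{-1}$ (equivalently $W=DW^{-1}D$ since $D^2=I$), and invoke disjointness of the Bruhat decomposition to conclude $P^{R,\sigma}=(P^{R,\sigma})^{-1}$. The only cosmetic difference is that the paper compares the two Bruhat decompositions of $W^{-1}$, whereas you transport back to $W$ directly, which changes nothing.
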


Echelon-independent posets are especially nice because they come equipped with an intrinsic echelonmotion bijection (without the additional specification of a linear extension). While we do not have a characterization of such posets, we do have some necessary conditions and some sufficient conditions for echelon-independence. In \cref{thm:algorithm1,thm:algorithm2}, we provide an algorithm to test whether or not a poset is echelon-independent. This algorithm is useful in practice; for example, it has allowed us to verify that the Bruhat order on the symmetric group $S_n$ is echelon-independent whenever $n\leq 5$ and is not echelon-independent when $n=6$. We also have the following additional results. Recall that a poset is \dfn{bounded} if it has a minimum element and a maximum element. A poset is called \dfn{connected} if its Hasse diagram is a connected graph. 

\begin{theorem}\label{thm:bounded}
Every echelon-independent connected poset is bounded. 
\end{theorem}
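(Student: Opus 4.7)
The plan is to show that echelon-independence together with connectedness forces $R$ to have a unique minimum; a dual argument then yields a unique maximum, so $R$ is bounded.

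The starting observation concerns the first position of a linear extension. If $\sigma$ is a linear extension of $R$ with $\sigma^{-1}(1)=m$ (so $m$ is necessarily minimal), then I claim
\[\Ech_\sigma(m)=\sigma^{-1}(r), \quad \text{where } r=\max\{\sigma(x) : x\in R,\ x\geq m\}.\]
Indeed, the first column of $W^{R,\sigma}$ has a $1$ in row $i$ precisely when $\sigma^{-1}(i)\geq m$, so its bottom-most non-zero entry is in row $r$. The rank characterization of Bruhat cells, applied to the first column, then forces $P^{R,\sigma}_{r,1}=1$; and because $\sigma$ is a linear extension, $\sigma^{-1}(r)$ is automatically a maximal element of $R$.

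Now fix a minimal element $m$, and let $M$ be any maximal element of $R$ with $M\geq m$. Prepending $m$ and appending $M$ to any linear extension of $R\setminus\{m,M\}$ produces a linear extension $\sigma$ of $R$ with $\sigma(m)=1$ and $\sigma(M)=n$, and the observation above gives $\Ech_\sigma(m)=M$. So if $m$ had two distinct maximal elements $M_1\neq M_2$ above it, applying this construction twice would yield linear extensions $\sigma,\sigma'$ with $\Ech_\sigma(m)=M_1$ and $\Ech_{\sigma'}(m)=M_2$, violating echelon-independence. Hence every minimal element $m$ has a unique maximal element $M(m)$ above it in $R$. A dual argument—using the last row of $W^{R,\sigma}$ for a linear extension with $\sigma(M)=n$, whose left-most non-zero entry sits at the column of the earliest element of the down-set of $M$—shows that every maximal $M$ has a unique minimal element $m(M)$ below it in $R$, and the maps $m\mapsto M(m)$ and $M\mapsto m(M)$ are mutually inverse bijections between the sets of minimal and maximal elements of $R$.

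Suppose toward contradiction that $R$ has two distinct minimal elements. Every element of $R$ lies above some minimal element, so $R$ is the union of the up-sets $U_m=\{x\in R:x\geq m\}$ as $m$ ranges over minimals. If these up-sets were pairwise disjoint, each $U_m$ would be a union of connected components of $R$ (any Hasse cover $x\lessdot y$ places $y$ in the same $U_m$ as $x$), contradicting the connectedness of $R$. So there exist distinct minimals $m\neq m'$ with $U_m\cap U_{m'}\neq\emptyset$; picking $x$ in the intersection and any maximal $M\geq x$ forces $M=M(m)=M(m')$ by uniqueness, and then the inverse bijection yields the contradiction $m=m(M)=m'$. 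Hence $R$ has a unique minimum; by the dual argument it also has a unique maximum, and is therefore bounded. The main difficulty is the opening lemma extracting the explicit combinatorial description of $\Ech_\sigma(m)$ from the Bruhat decomposition of $W^{R,\sigma}$; once that is in hand, the rest is a clean combinatorial unwinding.
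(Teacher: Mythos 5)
Your proof is correct, and the underlying mechanism is the same as the paper's: the Bruhat rank computation applied to the first column (or last row) of the Cartan matrix forces a minimal element placed first in the linear extension to be sent by echelonmotion to a specific maximal element above it, and this is manipulated to produce two linear extensions that disagree on a fixed minimal element. Your opening observation is a mild strengthening of the paper's Lemma~\ref{lem:min-to-max}: you compute $\Ech_\sigma(m)$ for any $\sigma$ with $\sigma(m)=1$ directly from equation~\eqref{eq:ranks}, rather than requiring both $\sigma(m)=1$ and $\sigma(M)=n$ and invoking the six conditions of Proposition~\ref{prop:labeling}.

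Where you diverge is in how connectedness is exploited. The paper uses a shortest-zigzag argument to produce, in one stroke, a single minimal element $x$ lying below two distinct maximal elements $y$ and $y'$, and immediately gets the contradiction. You instead first show echelon-independence forces each minimal element to sit below a unique maximal element (and dually), package these into mutually inverse bijections, and then use connectedness via the observation that the up-sets of the minimal elements cannot all be pairwise disjoint. Both uses of connectedness are clean; yours builds a bit more structure (the explicit bijection between minimals and maximals), while the paper's is more surgical. One could even shave your argument further: once you have the up-sets $U_m$ and $U_{m'}$ overlapping at some $x$ with a common maximal $M\geq x$, your own first-column observation gives linear extensions $\sigma,\sigma'$ with $\sigma(m)=\sigma'(m')=1$, $\sigma(M)=\sigma'(M)=n$, hence $\Ech_\sigma(m)=M=\Ech_{\sigma'}(m')$; echelon-independence plus bijectivity of $\Ech$ then gives $m=m'$ directly, without ever needing the dual "last-row'' observation or the bijection machinery. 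But as written the argument is correct and complete.
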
 

\begin{theorem}\label{thm:fixed}
Let $R$ be an echelon-independent connected poset of cardinality at least $2$. Then echelonmotion on $R$ has no fixed points. 
\end{theorem}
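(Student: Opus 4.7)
The plan is to invoke echelon-independence to reduce the problem to producing, for each $x \in R$, a single linear extension $\sigma$ with $\Ech_\sigma(x) \neq x$. By \cref{thm:bounded}, $R$ is bounded with $\hat{0} \neq \hat{1}$, and I will use only the existence of $\hat{1}$ together with a carefully chosen $\sigma$.

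Given $x \in R$, set $D_x = \{z \in R : z \leq x\}$ and $k = |D_x|$, and choose a linear extension $\sigma$ of $R$ with $\sigma(D_x) = \{1, \ldots, k\}$ and $\sigma(x) = k$. Such a $\sigma$ exists because no element outside $D_x$ lies below any element of $D_x$: one can independently linearly extend $D_x$ (with $x$ placed last) and $R \setminus D_x$, and concatenate. The goal is to show $P^{R,\sigma}[k, k] = 0$; by \cref{def:echelonmotion} this says $\Ech_\sigma(x) \neq x$, and by echelon-independence we then conclude $\Ech(x) \neq x$.

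For the computation, I plan to use the classical rank characterization of the Bruhat cell. Let $r(i, j)$ denote the rank of the submatrix of $W^{R,\sigma}$ formed by rows $i, i+1, \ldots, n$ and columns $1, 2, \ldots, j$. Then
\[ P^{R,\sigma}[i, j] = r(i, j) - r(i+1, j) - r(i, j-1) + r(i+1, j-1). \]
By our choice of $\sigma$, columns $1, \ldots, k$ are indexed by the elements of $D_x$, so row $k$ of $W^{R,\sigma}$ equals $(1, \ldots, 1, 0, \ldots, 0)$ with $k$ leading ones. If $x \neq \hat{1}$, then $\hat{1} \notin D_x$ and $\sigma(\hat{1}) > k$; the row of $W^{R,\sigma}$ indexed by $\sigma(\hat{1})$ is the all-ones row and coincides with row $k$ on the first $k$ columns as well as on the first $k-1$ columns. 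Hence row $k$ lies in the linear span of rows $k+1, \ldots, n$ under both restrictions, giving $r(k, k) - r(k+1, k) = 0 = r(k, k-1) - r(k+1, k-1)$ and therefore $P^{R,\sigma}[k, k] = 0$. If $x = \hat{1}$, then $k = n$, rows $k+1, \ldots, n$ are absent, and direct substitution gives $1 - 0 - 1 + 0 = 0$.

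The main obstacle should be justifying the rank formula for $P^{R,\sigma}$. This is a classical consequence of the Bruhat decomposition of $\mathrm{GL}_n(\mathbb{C})$ (it expresses the relative position of the standard flag and the flag of partial column spans of $W^{R,\sigma}$), but it will likely require a preliminary lemma in the paper. Once that is in place, the argument above reduces everything to a single combinatorial observation: the row of any Cartan matrix indexed by $\hat{1}$ is constantly $1$.
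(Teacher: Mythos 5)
Your proof is correct, and it is closely parallel to the paper's but in a dual form; the essential idea is the same. The paper constructs $\sigma$ so that $\Suc_\sigma(x)=\nabla_R(x)$, invokes \cref{prop:labeling} with $y=x$, and derives a contradiction between conditions (iv) and (vi) by taking $v=\hat 0$ (after first ruling out that $\hat 0$ could be a fixed point via \cref{lem:min-to-max}). You instead construct $\sigma$ so that $\Pre_\sigma(x)=\Delta_R(x)$ and argue directly with the rank criterion \eqref{eq:ranks}: the all-ones row of $W^{R,\sigma}$ indexed by $\hat 1$ already lies among rows $k+1,\dots,n$ and restricts to the same vector as row $k$ on columns $\leq k$ and $\leq k-1$, forcing $r(k,k)=r(k+1,k)$ and $r(k,k-1)=r(k+1,k-1)$, hence $P_{k,k}=0$. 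The boundary case $x=\hat 1$ is handled by direct computation. In other words, the paper exploits the all-ones \emph{column} determined by $\hat 0$, while you exploit the all-ones \emph{row} determined by $\hat 1$. Both rely on \cref{thm:bounded}. Your version is perhaps marginally more self-contained in that it bypasses \cref{prop:labeling} and \cref{lem:min-to-max} in favor of \eqref{eq:ranks} directly, which is indeed the preliminary lemma you anticipated; but the underlying mechanism is the same as the paper's.
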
 

\begin{theorem}\label{thm:MacNeille} 
The MacNeille completion of an echelon-independent connected poset is a semidistributive lattice. 
\end{theorem}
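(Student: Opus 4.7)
The plan is to deduce this statement from \cref{semidist-thm} by showing that the MacNeille completion $\overline{R}$ of an echelon-independent connected poset $R$ is itself an echelon-independent lattice; the theorem then forces it to be semidistributive. By \cref{thm:bounded}, $R$ is bounded, so $\overline{R}$ is a bounded lattice into which $R$ embeds as a subposet with the same top and bottom. Moreover, every element of $\overline{R}$ can be written both as a join in $\overline{R}$ of elements of $R$ and as a meet in $\overline{R}$ of elements of $R$; in particular, every non-minimum join-irreducible and every non-maximum meet-irreducible of $\overline{R}$ lies in $R$.

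The main step is a compatibility lemma: given a linear extension $\overline{\sigma}$ of $\overline{R}$ with restriction $\sigma$ to $R$, one should have $\Ech_{\overline{\sigma}}(x) = \Ech_{\sigma}(x)$ for every $x \in R$. Observe that $W^{R,\sigma}$ appears as the principal submatrix of $W^{\overline{R},\overline{\sigma}}$ indexed by the positions $\overline{\sigma}(R)$. While the Bruhat decomposition of an invertible matrix does not in general restrict naturally to principal submatrices, the new rows and columns of $W^{\overline{R},\overline{\sigma}}$ (those indexed by $\overline{R}\setminus R$) arise from elements that are joins and meets of $R$-elements, which imposes strong linear-algebraic dependencies between those rows/columns and the $R$-indexed ones. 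I would use these dependencies to argue that the Bruhat permutation of $W^{\overline{R},\overline{\sigma}}$ preserves the $R$-block and, restricted to that block, reproduces the Bruhat permutation of $W^{R,\sigma}$.

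With this compatibility lemma in hand, the argument concludes as follows. Given two linear extensions $\overline{\sigma}_1,\overline{\sigma}_2$ of $\overline{R}$, restrict them to linear extensions $\sigma_1,\sigma_2$ of $R$; echelon-independence of $R$ yields $\Ech_{\sigma_1}=\Ech_{\sigma_2}$, hence $\Ech_{\overline{\sigma}_1}$ and $\Ech_{\overline{\sigma}_2}$ agree on $R$. Since the join-irreducibles and meet-irreducibles of $\overline{R}$ all lie in $R$, I would argue that echelonmotion on $\overline{R}$ is determined by its behavior on $R$, so the equality propagates to all of $\overline{R}$, giving echelon-independence of $\overline{R}$ and hence semidistributivity via \cref{semidist-thm}.

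The hard part will be the compatibility lemma: the Bruhat decomposition is inherently global, so isolating its restriction to the $R$-block requires exploiting the very specific combinatorial feature, characteristic of MacNeille completions, that each new element is simultaneously a join and a meet of old elements. A secondary concern is the propagation step from $R$ to $\overline{R}$; if the join-irreducible/meet-irreducible argument above turns out to be insufficient, I would instead directly verify the two semidistributive laws in $\overline{R}$ by expressing each element as a join of $R$-elements and invoking a ``local'' form of semidistributivity that follows from echelon-independence of $R$ (specifically, that echelonmotion on $R$ exists and is intrinsic).
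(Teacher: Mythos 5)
Your proposed route goes in the opposite direction from the paper's. The paper argues by contraposition: assuming the MacNeille completion $L$ of a bounded connected $R$ fails meet-semidistributivity, \cref{SDCondition} and \cref{lem:max_are_meet-irreducible} produce a join-irreducible $j$ and two distinct maximal elements $m,m'\in\MM_L$ of $\Odown_L(j)$; since $\JJ_L,\MM_L\subseteq R$, one then builds two linear extensions $\sigma,\sigma'$ of $R$ (with $\Pre_\sigma(j)=\Delta_R(j)$, $\Suc_\sigma(m)=\nabla_R(m)$, and likewise for $m'$) and checks via \cref{prop:labeling}, using explicit rank-one perturbations $\rr$ supported on $\{j,p\}$ and $\bb$ supported on $\{m,q\}$, that $\Ech_\sigma(j)=m\neq m'=\Ech_{\sigma'}(j)$. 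Your strategy instead tries to lift echelon-independence from $R$ to $\overline R$ and then invoke \cref{semidist-thm}.

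The central step of your plan, the ``compatibility lemma'' asserting $\Ech_{\overline\sigma}|_R=\Ech_\sigma$ whenever $\overline\sigma$ restricts to $\sigma$, is in fact false, and the paper's own \cref{fig:counterexample1} refutes it. There, $R$ is a connected poset whose MacNeille completion $\overline R$ is the Boolean lattice $B_4$; by \cref{semidist-thm}, $\overline R$ is echelon-independent. Every linear extension $\sigma$ of $R$ extends to a linear extension $\overline\sigma$ of $\overline R$ (since $R$ is a subposet of $\overline R$, the relation $\preceq_\sigma\cup\leq_{\overline R}$ is acyclic), so if your lemma held one would get $\Ech_{\sigma_1}=\Ech_{\overline\sigma_1}|_R=\Ech_{\overline\sigma_2}|_R=\Ech_{\sigma_2}$ for all $\sigma_1,\sigma_2$, forcing $R$ to be echelon-independent --- but $R$ is not. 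So the extra rows/columns of $W^{\overline R,\overline\sigma}$ genuinely disturb the Bruhat permutation on the $R$-block, and the fact that new elements are joins and meets of $R$-elements is not enough to prevent this. The subsequent propagation step (``echelonmotion on $\overline R$ is determined by its behavior on $R$ because $\JJ_{\overline R},\MM_{\overline R}\subseteq R$'') is also unjustified: echelonmotion is a Bruhat-cell datum, not a lattice morphism, and there is no mechanism by which its values on irreducibles would determine its values elsewhere. Your fallback --- directly verifying the semidistributive laws from a ``local semidistributivity'' implied by echelon-independence of $R$ --- is essentially what the paper does, but it needs the contrapositive formulation together with \cref{prop:labeling} to make the connection between echelon-independence and the existence of a unique maximum in $\Odown_L(j)$; as sketched it is too vague to carry the proof.
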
 

In \cref{sec:echelon-independent}, we provide a counterexample to the converse of \cref{thm:MacNeille} by exhibiting a connected poset that is not echelon-independent but whose MacNeille completion is distributive (and hence, semidistributive).

\subsection{Outline} 
\cref{sec:background} provides background on posets, lattices, and Bruhat decomposition. In \cref{sec:basics}, we establish some useful results about echelonmotion. In \cref{sec:echelon-independent}, we prove \cref{thm:bounded,thm:fixed,thm:MacNeille}; we also prove \cref{thm:algorithm1,thm:algorithm2}, which produce an algorithm to test echelon-independence. \cref{sec:semidistributive} concerns semidistributive lattices and is devoted to proving \cref{semidist-thm}. \cref{sec:trim} concerns trim lattices and is devoted to proving \cref{thm:trim}. \cref{sec:Eulerian} concerns Eulerian posets and is devoted to proving \cref{thm:Eulerian}. Finally, \cref{sec:conclusion} presents several suggestions for future work.

\section{Background}\label{sec:background}  
Let $R$ be a poset with partial order relation $\leq$. A \dfn{subposet} of $R$ is a subset of $R$ equipped with the restriction of the partial order $\leq$. 
For $x,y\in R$, the \dfn{interval} from $x$ to $y$ is the set $[x,y]=\{z\in R:x\leq z\leq y\}$, which we view as a subposet of $R$. We say $y$ \dfn{covers} $x$ and write $x\lessdot y$ if $[x,y]=\{x,y\}$ and $x\neq y$. Let $\Cov_R^\downarrow(x)=\{y\in R:y\lessdot x\}$ be the set of elements covered by $x$, and let $\Cov_R^\uparrow(x)=\{y\in R:x\lessdot y\}$ be the set of elements that cover $x$. For $x\in R$, let 
\[\Delta_R(x)=\{y\in R:y\leq x\}\quad\text{and}\quad\nabla_R(x)=\{y\in R: y\geq x\}\]
be the principal lower order ideal and the principal upper order ideal generated by $x$. 

A \dfn{chain} in $R$ is a totally ordered subset of $R$. A chain is \dfn{maximal} if it is not a proper subset of a larger chain. The \dfn{order complex} of $R$ is the simplicial complex whose faces are the chains in $R$. 

We say $R$ is \dfn{bounded} if it has a minimum element $\hat 0$ and a maximum element $\hat 1$. In this case, the elements covering $\hat 0$ are called \dfn{atoms}, and the elements covered by $\hat 1$ are called \dfn{coatoms}. 

The \dfn{dual} of $R$, denoted $R^*$, is the poset with the same underlying set as $R$ but with the opposite partial order. That is, $x\leq y$ in $R^*$ if and only if $y\leq x$ in $R$. 

The \dfn{M\"obius function} of $R$ is the unique map $\mu_R\colon\{(x,y)\in R\times R:x\leq y\}\to\mathbb Z$ such that 
$\sum_{z\in [x,y]}\mu_R(x,z)=\delta_{x,y}$ for every interval $[x,y]$, where $\delta$ denotes the Kronecker delta. When $R$ has a minimum element $\hat 0$, we can restrict $\mu_R$ to a function on $R$ by letting $\mu_R(x)=\mu_R(\hat 0,x)$ for every $x\in R$. It is known \cite[Chapter~3]{Stanley} that $\mu_R(x,y)$ is equal to the reduced Euler characteristic of the order complex of the subposet $[x,y]\setminus\{x,y\}$. 

A \dfn{rank function} on $R$ is a map $\rk\colon R\to\mathbb Z$ such that $\rk(y)=\rk(x)+1$ for every cover relation $x\lessdot y$; if such a map exists, we say $R$ is \dfn{graded}. We say $R$ is \dfn{Eulerian} if it is graded and its M\"obius function satisfies $\mu_R(x,y)=(-1)^{\rk(y)-\rk(x)}$ for all $x,y\in R$ with $x\leq y$. 

A \dfn{lattice} is a poset $L$ such that any two elements $x,y\in L$ have a greatest lower bound, which is called their \dfn{meet} and denoted $x\wedge y$, and a least upper bound, with is called their \dfn{join} and denoted $x\vee y$. The meet and join operations on a lattice are commutative and associative, so we may consider the meet and join of any subset $X\subseteq L$, which we denote by $\bigwedge X$ and $\bigvee X$, respectively. 

Let $L$ be a lattice. Following \cite{DefantMeeting,DefantPopCoxeter}, we define the \dfn{pop-stack operator} $\popdown_L\colon L\to L$ by 
\[\popdown_L(x)=\bigwedge(\{x\}\cup\Cov_L^\downarrow(x)).\] 
Note that the presence of $\{ x \}$ in the definition is to handle the case where $\Cov_L^\downarrow(x)=\emptyset$; otherwise, the term $\{ x \}$ is redundant.
Let us also define 
\begin{equation}\label{eq:O}
\Odown_L(x)=\{z\in L:z\wedge x=\popdown_L(x)\}.
\end{equation}
An element of $L$ is \dfn{join-irreducible} (respectively, \dfn{meet-irreducible}) if it covers (respectively, is covered by) exactly $1$ element. Let $\JJ_L$ and $\MM_L$ denote the set of join-irreducible elements and the set of meet-irreducible elements of $L$, respectively. For $j\in\JJ_L$ and $m\in\MM_L$, we write $j_*$ for the unique element covered by $j$ and write $m^*$ for the unique element that covers $m$. 

We will need the following simple lemma. 

\begin{lemma}\label{lem:max_are_meet-irreducible} 
Let $x\lessdot y$ be a cover relation in a lattice $L$. Every maximal element of the set ${\{z\in L:z\wedge y=x\}}$ is meet-irreducible, and every minimal element of the set $\{z\in L:z\vee x=y\}$ is join-irreducible. 
\end{lemma}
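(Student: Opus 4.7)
The plan is to prove only the first statement, since the second one is the image of the first under passing to the dual lattice $L^*$: the cover relation $x\lessdot y$ in $L$ becomes the cover relation $y\lessdot x$ in $L^*$, the meet $z\wedge y$ in $L^*$ is the join $z\vee y$ in $L$, and ``maximal element'' and ``meet-irreducible'' become ``minimal element'' and ``join-irreducible,'' respectively.  One has to be a bit careful that the roles of $x$ and $y$ in the set under consideration get swapped correctly, but once this bookkeeping is done the second assertion is automatic.

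To prove the first assertion, let $m$ be a maximal element of $S=\{z\in L:z\wedge y=x\}$ and argue by contradiction, assuming $m$ is not meet-irreducible. By the definition used in the paper, this means $m$ is either the maximum element of $L$ or is covered by at least two distinct elements. The first case is ruled out immediately: if $m$ is the maximum, then $m\geq y$, so $m\wedge y=y\neq x$, contradicting $m\in S$.

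So assume $m\lessdot m_1$ and $m\lessdot m_2$ with $m_1\neq m_2$. The key observation is that for each $i\in\{1,2\}$, the element $m_i\wedge y$ lies in the interval $[x,y]$, because it dominates $m\wedge y=x$ and is bounded above by $y$. Since $x\lessdot y$, this interval is just $\{x,y\}$; and since $m_i>m$ forces $m_i\notin S$ by maximality of $m$, we must have $m_i\wedge y=y$, i.e., $m_i\geq y$.

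The only calculation left is to derive a contradiction from $m_1,m_2\geq y$. Look at $m_1\wedge m_2$: it is $\geq y>x\geq m$, so in particular $m_1\wedge m_2>m$. But $m\lessdot m_1$ and $m<m_1\wedge m_2\leq m_1$ force $m_1\wedge m_2=m_1$, hence $m_1\leq m_2$; then $m\lessdot m_2$ together with $m<m_1\leq m_2$ forces $m_1=m_2$, the desired contradiction. I do not anticipate a significant obstacle here — the argument is a short, direct case analysis — the only subtlety being the correct setup of the duality for the second half of the lemma.
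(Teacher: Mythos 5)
Your proof follows essentially the same route as the paper's: both reduce to the case where the maximal element $m$ (the paper's $q$) is covered by two distinct elements, show that each such cover must lie above $y$, and derive a contradiction. The one defect is the inequality chain ``$\geq y > x \geq m$'': since $m\wedge y = x$, you have $x\leq m$, not $x\geq m$, and the chain as written does not establish $m_1\wedge m_2 > m$. The conclusion is still correct — $m_1\wedge m_2\geq m$ because $m\leq m_1,m_2$, and $m_1\wedge m_2\neq m$ because $y\leq m_1\wedge m_2$ while $y\not\leq m$ (as $m\wedge y=x\neq y$) — so with that repair your final case analysis goes through and matches the paper's in substance. (The paper instead finishes by observing $y\leq m_1\wedge m_2 = m$ directly, but this is the same contradiction.)
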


\begin{proof}
We will prove only the first statement since the second statement follows from an analogous dual argument. Let $q$ be a maximal element of $\{z\in L:z\wedge y=x\}$, and suppose by way of contradiction that $q$ is not meet-irreducible. We know that $q$ is not the maximum element of $L$ since $y\not\leq q$. Thus, there are distinct elements $w,w'\in L$ that cover $q$. The maximality of $q$ ensures that $w\wedge y\neq x$ and $w'\wedge y\neq x$. Because $x\leq q<w$ and $x\leq q<w'$, this forces $w\wedge y$ and $w'\wedge y$ to be equal to $y$. It follows that $y\leq w$ and $y\leq w'$, so $y\leq w\wedge w'=q$. This is a contradiction. 
\end{proof} 

A lattice $L$ is called \dfn{meet-semidistributive} if for all $x,y\in L$ such that $x\leq y$, the set \[\{z\in L:z\wedge y=x\}\] has a maximum element. A lattice is \dfn{join-semidistributive} if its dual is meet-semidistributive. A lattice is \dfn{semidistributive} if it is both meet-semidistributive and join-semidistributive. 

\begin{proposition}[{\cite[Theorem~2.56]{Free}}]\label{SDCondition}
Let $L$ be a lattice. Then $L$ is meet-semidistributive if and only if $\Odown_L(j)$ has a maximum element for every $j\in\JJ_L$. 
\end{proposition}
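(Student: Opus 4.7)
The plan is to argue the two implications separately. The forward direction is immediate from the definitions: for any $j \in \JJ_L$, the pair $j_* \leq j$ satisfies the meet-semidistributive hypothesis, yielding a maximum in $\{z : z \wedge j = j_*\} = \Odown_L(j)$. (Here I use that $\popdown_L(j) = j \wedge j_* = j_*$ because $\Cov_L^\downarrow(j) = \{j_*\}$.)

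For the converse, I would assume that $\Odown_L(j)$ has a maximum for every $j \in \JJ_L$. Since $L$ is finite, this is equivalent to $\Odown_L(j)$ being closed under joins. The goal is then to show, for every $x \leq y$ in $L$, that $M_y(x) := \{z : z \wedge y = x\}$ is likewise closed under joins (and hence has a maximum, establishing meet-semidistributivity). I would argue by contradiction: take $z_1, z_2 \in M_y(x)$ and suppose $(z_1 \vee z_2) \wedge y > x$. Then some cover $x \lessdot u \leq (z_1 \vee z_2) \wedge y$ exists, from which I would extract a suitable join-irreducible $j$.

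The key step, and the main obstacle, is to choose $j$ so that $j \leq u$, $j \not\leq x$, and moreover $j_* \leq x$. I would achieve this by taking $j$ minimal among join-irreducibles below $u$ but not below $x$. If $j_*$ failed to lie below $x$, expanding $j_*$ as a join of join-irreducibles would produce a smaller such element, contradicting minimality; this relies on the elementary fact that any element strictly below a join-irreducible $j$ is bounded above by $j_*$ (otherwise $j$ would be the join of two strictly smaller elements).

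With such $j$ in hand, the rest is routine. Since $z_i \wedge y = x$, we have $z_i \geq x \geq j_*$, so $z_i \wedge j \geq j_*$. Conversely, $z_i \wedge j \leq z_i \wedge y = x$ combined with $j \not\leq x$ forces $z_i \wedge j < j$, and hence $z_i \wedge j \leq j_*$ by the observation just noted. Thus $z_1, z_2 \in \Odown_L(j)$, and closure under joins would give $(z_1 \vee z_2) \wedge j = j_*$, contradicting $j \leq z_1 \vee z_2$. This contradiction yields $(z_1 \vee z_2) \wedge y = x$, completing the argument.
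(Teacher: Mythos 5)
Your proof is correct and complete. Note that the paper does not supply its own proof of this proposition: it is cited directly to Freese--Je\v{z}ek--Nation \cite[Theorem~2.56]{Free}, so there is nothing in the text to compare against. Your argument is a standard one and fills that gap self-containedly. The forward direction is, as you say, immediate: apply the meet-semidistributivity hypothesis to the pair $j_* \leq j$ and observe $\popdown_L(j) = j \wedge j_* = j_*$. For the converse, the crucial maneuver is to pass from an arbitrary pair $x \leq y$ to a join-irreducible $j$ with $j_* \leq x < j$ and $j \leq u$ for a cover $x \lessdot u \leq (z_1 \vee z_2) \wedge y$; taking $j$ minimal among join-irreducibles below $u$ but not below $x$ does exactly this, and the verification that $z_1, z_2 \in \Odown_L(j)$ (so that closure of $\Odown_L(j)$ under joins forces $(z_1 \vee z_2) \wedge j = j_*$, contradicting $j \leq z_1 \vee z_2$) is carried out correctly. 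One small clarification on your own exposition: the ``elementary fact'' that $w < j$ implies $w \leq j_*$ is not really needed to establish $j_* \leq x$ (that follows just from $j'' \leq j_* < j$ contradicting minimality), but it \emph{is} what you use later to conclude $z_i \wedge j \leq j_*$ from $z_i \wedge j < j$; it would be cleaner to cite it at that point.
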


Following \cite{Markowsky92}, we say a lattice $L$ is \dfn{extremal} if it has a maximum-length chain of cardinality $k+1$, where $k=|\JJ_L|=|\MM_L|$. An element $x\in L$ is called \dfn{left modular} if for all
$y,z\in L$ satisfying $y\leq z$, we have the equality $(y\vee x)\wedge z=y\vee(x\wedge z)$. A lattice is \dfn{left modular} if it
has a maximal chain of left modular elements. A lattice is \dfn{trim} if it is both extremal and left modular \cite{T06,TW19}. 

We will occasionally need to invoke results from the article \cite{DW23}, which concerns a family of lattices called \emph{semidistrim lattices}. We will not need to define this family here; the main fact we need is that semidistributive lattices and trim lattices are semidistrim. Suppose $L$ is semidistrim, and let $\hat 0$ and $\hat 1$ be the minimum and maximum elements of $L$, respectively. One result we will need is \cite[Corollary~8.2]{DW23}, which states that the order complex of the subposet $L\setminus\{\hat 0,\hat 1\}$ is either contractible or homotopy equivalent to a sphere. Moreover, this order complex is contractible if and only if $\hat 0\neq\popdown_L(\hat 1)$. In addition, \cite[Theorem~7.8]{DW23} states that intervals in semidistrim lattices are semidistrim. Together with the aforementioned relationship between M\"obius functions and reduced Euler characteristics, these facts imply the following lemma. 

\begin{lemma}\label{lem:Mobius_nonzero}
Let $L$ be a semidistrim lattice. For every $x\in L$, we have $\mu_L(\popdown_L(x),x)\in\{-1,1\}$. 
\end{lemma}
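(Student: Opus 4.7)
Let me write $y = \popdown_L(x)$. The plan is to identify $\mu_L(y,x)$ with the reduced Euler characteristic of the order complex of $(y,x) := [y,x] \setminus \{y,x\}$ and then apply the semidistrim sphericity result \cite[Corollary~8.2]{DW23} to the interval $[y,x]$.

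First I would dispose of the degenerate case $y = x$, which happens precisely when $\Cov_L^\downarrow(x) = \emptyset$, i.e.\ when $x$ is the minimum element of $L$; there $\mu_L(x,x) = 1$. So assume $y < x$. By \cite[Theorem~7.8]{DW23} the interval $[y,x]$ is itself a semidistrim lattice, with minimum $y$ and maximum $x$. Since $\mu$ restricted to any interval agrees with the M\"obius function of that interval, and since the latter equals the reduced Euler characteristic of the order complex of $(y,x)$, it suffices to show that this order complex is homotopy equivalent to a sphere.

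The key technical step is verifying that $\popdown_{[y,x]}(x) = y$; once this is done, \cite[Corollary~8.2]{DW23} applied to $[y,x]$ immediately excludes the contractible case and concludes that the order complex of $(y,x)$ is homotopy equivalent to a sphere, hence has reduced Euler characteristic in $\{-1,1\}$. To verify this, I would observe that by definition
\[
y = \popdown_L(x) = \bigwedge\bigl(\{x\}\cup \Cov_L^\downarrow(x)\bigr),
\]
so every element covered by $x$ in $L$ lies above $y$ and thus belongs to $[y,x]$. Since cover relations in an interval of $L$ are inherited from $L$, this gives $\Cov_{[y,x]}^\downarrow(x) = \Cov_L^\downarrow(x)$, and taking the meet yields $\popdown_{[y,x]}(x) = y$ as required.

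The only potential obstacle is the verification that the pop-stack operator computed inside the interval agrees with the one computed inside $L$, but as noted above this follows essentially from the definition of $y$. Everything else is a direct invocation of the two cited results from \cite{DW23} together with the standard identification of $\mu$ with reduced Euler characteristic of the open interval.
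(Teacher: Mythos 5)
Your proof is correct and follows essentially the same approach the paper sketches immediately before the lemma: restrict to the interval $[\popdown_L(x),x]$, invoke \cite[Theorem~7.8]{DW23} to see it is semidistrim, check that $\popdown_{[\popdown_L(x),x]}(x)=\popdown_L(x)$, and apply \cite[Corollary~8.2]{DW23} together with the M\"obius/Euler-characteristic identification. You simply spell out the verification of the pop-stack compatibility that the paper leaves implicit.
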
 

The \dfn{MacNeille completion} of a poset $R$ is the smallest lattice that contains $R$ as a subposet; it is unique up to isomorphism. We refer to \cite{Schroder16} for more details, including an explicit construction of the MacNeille completion.

\section{Basics of Echelonmotion}\label{sec:basics}  
In this section, we prove some results that will allow us to compute echelonmotion in many circumstances. 

Let $\mathrm{GL}_n(\CC)$ be the set of invertible $n\times n$ complex matrices, and let $B$ be the set of upper-triangular matrices in $\mathrm{GL}_n(\CC)$. For every matrix $M\in\mathrm{GL}_n(\CC)$, there is a unique $n\times n$ permutation matrix $P$ such that $M\in BPB$. There is a standard method to compute $P$ using ranks of submatrices of $M$. For $i,j\in[n]$, let $M_{\geq i,\leq j}$ denote the lower-left submatrix of $M$ consisting of entries that belong to rows with indices at least $i$ and columns with indices at most $j$. Then $P_{i,j}=1$ if and only if 
\begin{equation}\label{eq:ranks} 
\rank(M_{\geq i+1,\leq j-1})=\rank(M_{\geq i+1,\leq j})=\rank(M_{\geq i,\leq j-1})=\rank(M_{\geq i,\leq j})-1.   
\end{equation} 
Indeed, this follows from the observation that multiplying a matrix by an invertible upper-triangular matrix (on the left or the right) cannot change the ranks of the lower-left submatrices. 

Let $\sigma$ be a linear extension of an $n$-element poset $R$. For $x\in R$, we define 
\[\Pre_\sigma(x)=\sigma^{-1}([1,\sigma(x)])\quad\text{and}\quad \Suc_\sigma(x)=\sigma^{-1}([\sigma(x),n]).\] If we view $\sigma$ as a total order on $R$, then $\Pre_\sigma(x)$ is the set of elements that weakly precede $x$ in $\sigma$, while $\Suc_\sigma(x)$ is the set of elements that weakly succeed $x$ in $\sigma$. For $Q\subseteq R$, we define $\max_{\sigma}(Q)$ to be the element $q \in Q$ such that $\sigma(q) = \max\{ \sigma(q') : q' \in Q \}$. 

\begin{proposition}\label{prop:labeling} 
Let $\sigma$ be a linear extension of a poset $R$, and let $x,y\in R$. We have $\Ech_\sigma(x)=y$ if and only if there exist maps $\rr\colon\Pre_\sigma(x)\to\CC$ and $\bb\colon\Suc_\sigma(y)\to\CC$ such that the following conditions hold:  
\begin{enumerate} 
\item[\emph{(i)}] $\rr(x)\neq 0$.  
\item[\emph{(ii)}] $\bb(y)\neq 0$. 
\item[\emph{(iii)}] $\sum_{w\in\Pre_\sigma(x)\cap\Delta_R(y)}\rr(w)\neq 0$. 
\item[\emph{(iv)}] $\sum_{w\in\Suc_\sigma(y)\cap\nabla_R(x)}\bb(w)\neq 0$. 
\item[\emph{(v)}] For every $u\in \Suc_\sigma(y)\setminus\{y\}$, we have $\sum_{w\in\Pre_\sigma(x)\cap\Delta_R(u)}\rr(w)=0$. 
\item[\emph{(vi)}] For every $v\in \Pre_\sigma(x)\setminus\{x\}$, we have $\sum_{w\in\Suc_\sigma(y)\cap\nabla_R(v)}\bb(w)=0$. 
\end{enumerate}
\end{proposition}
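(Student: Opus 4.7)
The plan is to apply the standard rank characterization of Bruhat decomposition recalled at the start of \cref{sec:basics}: setting $a=\sigma(y)$ and $b=\sigma(x)$, we have $\Ech_\sigma(x)=y$ if and only if
\[\rank(W^{R,\sigma}_{\geq a+1,\leq b-1})=\rank(W^{R,\sigma}_{\geq a+1,\leq b})=\rank(W^{R,\sigma}_{\geq a,\leq b-1})=\rank(W^{R,\sigma}_{\geq a,\leq b})-1.\]
I will translate each of these rank equalities into a statement about linear dependences among rows and columns of $W^{R,\sigma}$, and then into a statement about the poset by using that the entry of $W^{R,\sigma}$ in the row for $u$ and column for $w$ equals $1$ precisely when $w\in\Delta_R(u)$ (equivalently, $u\in\nabla_R(w)$). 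Under this dictionary, each of the sums appearing in conditions (iii)--(vi) is simply the inner product of $\rr$ or $\bb$ with the restriction of a row or column of $W^{R,\sigma}$ to $\Pre_\sigma(x)$ or $\Suc_\sigma(y)$.

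For the forward direction, the equality $\rank(W^{R,\sigma}_{\geq a+1,\leq b-1})=\rank(W^{R,\sigma}_{\geq a+1,\leq b})$ says that the column indexed by $x$, restricted to rows in $\Suc_\sigma(y)\setminus\{y\}$, lies in the span of the columns indexed by $\Pre_\sigma(x)\setminus\{x\}$ restricted to the same rows. I would record the coefficients of such a dependence as $\rr$ (with $\rr(x)\neq 0$ to account for column $x$), which immediately yields (i) and (v); the dual equality $\rank(W^{R,\sigma}_{\geq a+1,\leq b-1})=\rank(W^{R,\sigma}_{\geq a,\leq b-1})$ produces $\bb$ satisfying (ii) and (vi). The remaining rank jump $\rank(W^{R,\sigma}_{\geq a,\leq b})=\rank(W^{R,\sigma}_{\geq a,\leq b-1})+1$ forces (iii): if (iii) failed, the relation recorded by $\rr$ would persist once row $y$ is appended, so column $x$ of $W^{R,\sigma}_{\geq a}$ would be a combination of earlier columns, contradicting the jump. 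Condition (iv) follows symmetrically from $\rank(W^{R,\sigma}_{\geq a,\leq b})=\rank(W^{R,\sigma}_{\geq a+1,\leq b})+1$.

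For the reverse direction, given $\rr$ and $\bb$ satisfying (i)--(vi), conditions (i) and (v) give $\rank(W^{R,\sigma}_{\geq a+1,\leq b})=\rank(W^{R,\sigma}_{\geq a+1,\leq b-1})$, and (ii) and (vi) give $\rank(W^{R,\sigma}_{\geq a,\leq b-1})=\rank(W^{R,\sigma}_{\geq a+1,\leq b-1})$. The crux is obtaining the joint rank jump $\rank(W^{R,\sigma}_{\geq a,\leq b})=\rank(W^{R,\sigma}_{\geq a+1,\leq b-1})+1$, since this is not a formal consequence of the two ``marginal'' equalities just derived. I would argue by contradiction: if the rank failed to jump, then column $x$ of $W^{R,\sigma}_{\geq a}$ would lie in the span of the columns of $W^{R,\sigma}_{\geq a}$ indexed by $\Pre_\sigma(x)\setminus\{x\}$, and pairing any such expression against the linear functional on rows recorded by $\bb$ would collapse the left-hand side (by (vi)) while leaving the right-hand side equal to $\sum_{w\in\Suc_\sigma(y)\cap\nabla_R(x)}\bb(w)$, directly contradicting (iv). This cross-use of $\rr$ against $\bb$ (and vice versa in the forward direction, albeit in a simpler form) is the single nontrivial point of the proof; the rest is a routine dictionary between submatrix ranks, kernel elements, and the poset-theoretic descriptions of the entries of $W^{R,\sigma}$.
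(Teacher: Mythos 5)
Your proof is correct and follows essentially the same approach as the paper: it reduces to the rank criterion for Bruhat cells and translates each rank equality into the existence of a column or row relation encoded by $\rr$ or $\bb$. The only notable difference is in the reverse direction: to obtain the rank jump $\rank(W_{\geq a,\leq b})=\rank(W_{\geq a+1,\leq b-1})+1$, you argue by a cross-pairing of a hypothetical column relation against the row functional $\bb$ (using (iv) and (vi)), whereas the paper gets the same jump more directly by noting that (iii) says $\rr\in\ker W_{\geq a+1,\leq b}\setminus\ker W_{\geq a,\leq b}$ and invoking rank--nullity; both are valid, yours just takes a slightly longer route.
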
 
\begin{proof} 
Let $W=W^{R,\sigma}$ be the Cartan matrix of $R$ with respect to $\sigma$, and let $P=P^{R,\sigma}$ be the permutation matrix in the Bruhat decomposition of $W$. Let $j=\sigma(x)$ and $i=\sigma(y)$. Our goal is to show that $P_{i,j}=1$ if and only if the six listed conditions hold. 

Conditions (i) and (v) together state that there is a linear relation among the columns of $W_{\geq i+1,\leq j}$ (the coefficient of column $\sigma(z)$ is $\rr(z)$) in which the column $j$ has a nonzero coefficient, so they are equivalent to the equality $\rank(W_{\geq i+1,\leq j-1})=\rank(W_{\geq i+1,\leq j})$. Condition (iii) then says that the corresponding coefficients do not form a linear relation among the columns of $W_{\geq i,\leq j}$, which implies that the dimension of the null space of $W_{\geq i,\leq j}$ is smaller than that of $W_{\geq i+1,\leq j}$. By the rank-nullity theorem, this implies that $\rank(W_{\geq i,\leq j})-1=\rank(W_{\geq i+1,\leq j})$. We conclude that conditions (i), (iii), and (v) together are equivalent to the statement that \[\rank(W_{\geq i+1,\leq j-1})=\rank(W_{\geq i+1,\leq j})=\rank(W_{\geq i,\leq j})-1.\] A similar argument (swapping the roles of rows and columns) shows that conditions (ii), (iv), and (vi)  are equivalent to the statement that \[\rank(W_{\geq i+1,\leq j-1})=\rank(W_{\geq i,\leq j-1})=\rank(W_{\geq i,\leq j})-1.\] According to \eqref{eq:ranks}, this proves the desired result. 
\end{proof} 

\begin{proposition}\label{prop:new}
Let $\sigma$ be a linear extension of a poset $R$, and let $x,y\in R$. Suppose there exists a map $\rr\colon\Pre_\sigma(x)\to\CC$ such that $\rr(x)\neq 0$ and such that $\sum_{w\in\Pre_\sigma(x)\cap\Delta_R(u)}\rr(w)=0$ for every $u\in \Suc_\sigma(y)\setminus\{y\}$. Then $\sigma(\Ech_\sigma(x))\leq\sigma(y)$. 
\end{proposition}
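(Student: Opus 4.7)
The plan is to mirror the argument of \cref{prop:labeling}, using only its ``column side''. Set $j = \sigma(x)$ and $i = \sigma(y)$, and let $W = W^{R,\sigma}$ and $P = P^{R,\sigma}$. Since the entry $W_{\sigma(u),\sigma(w)}$ equals $1$ exactly when $w \leq u$, the hypothesis that $\sum_{w \in \Pre_\sigma(x) \cap \Delta_R(u)} \rr(w) = 0$ for every $u \in \Suc_\sigma(y) \setminus \{y\}$ says precisely that the vector $(\rr(w))_{w \in \Pre_\sigma(x)}$ encodes a linear relation among the columns of the submatrix $W_{\geq i+1, \leq j}$. The assumption $\rr(x) \neq 0$ ensures that this relation has a nonzero coefficient on column $j$, so $\rank(W_{\geq i+1, \leq j-1}) = \rank(W_{\geq i+1, \leq j})$.

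Next I would transfer this rank equality from $W$ to $P$. Because multiplication on either side by an element of $B$ preserves the rank of every lower-left submatrix, $\rank(W_{\geq a,\leq b}) = \rank(P_{\geq a,\leq b})$ for all $a,b \in [n]$, and for a permutation matrix this rank is simply the number of $1$'s situated in rows $\geq a$ and columns $\leq b$. Consequently, the number of $1$'s of $P$ in rows $\geq i+1$ and columns $\leq j$ equals the number in rows $\geq i+1$ and columns $\leq j-1$, which forces the unique $1$ in column $j$ of $P$ to lie in some row $\leq i$. By \cref{def:echelonmotion} that row is $\sigma(\Ech_\sigma(x))$, yielding $\sigma(\Ech_\sigma(x)) \leq i = \sigma(y)$.

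I do not expect a genuine obstacle here: the statement is a ``one-sided'' distillation of the proof of \cref{prop:labeling}, obtained by invoking only conditions (i) and (v) from that result. The weakening from the equality $\Ech_\sigma(x) = y$ to the inequality $\sigma(\Ech_\sigma(x)) \leq \sigma(y)$ reflects exactly the absence of condition (iii), which would have been needed to pin down the unique $1$ in column $j$ to row $i$ rather than merely bounding its row index by $i$.
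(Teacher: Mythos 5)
Your argument is correct and is essentially the same as the paper's: both produce the linear relation among the columns of $W_{\geq i+1,\leq j}$ with a nonzero coefficient on column $j$, derive $\rank(W_{\geq i+1,\leq j-1})=\rank(W_{\geq i+1,\leq j})$, and transfer this to $P$ via the Bruhat-invariance of lower-left submatrix ranks. The only cosmetic difference is in the wrap-up: the paper iterates over each $i' > i$ and applies the rank criterion \eqref{eq:ranks} to show $P_{i',j}=0$, while you observe once that the two counts of $1$'s of $P$ agree, which immediately places the unique $1$ in column $j$ in a row $\leq i$; these are the same computation phrased differently.
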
 
\begin{proof}
Let $W=W^{R,\sigma}$ be the Cartan matrix of $R$ with respect to $\sigma$, and let $P=P^{R,\sigma}$ be the permutation matrix in the Bruhat decomposition of $W$. Let $n=|R|$, and let $j=\sigma(x)$ and $i=\sigma(y)$. Fix $i'\in[n]$ such that $i'>i$; we will show that $\sigma(\Ech_\sigma(x))\neq i'$. The hypotheses ensure that there is a linear relation among the columns of $W_{\geq i+1,\leq j}$ (the coefficient of column $\sigma(z)$ is $\rr(z)$) in which the column $j$ has a nonzero coefficient. Hence, there is a linear relation among the columns of $W_{\geq i',\leq j}$ in which the column $j$ has a nonzero coefficient. This implies that $\rank(W_{\geq i',\leq j-1})=\rank(W_{\geq i',\leq j})$, so it follows from \eqref{eq:ranks} that $P_{i',j}=0$. Hence, $\sigma(\Ech_\sigma(x))\neq i'$. 
\end{proof} 

\begin{proposition}\label{cor:alpha} 
Let $\sigma$ be a linear extension of a lattice $L$. Suppose that $\mu_L(\popdown_L(x),x)\neq 0$ for every $x\in L$. If the map $x \mapsto \max_{\sigma}(\Odown_L(x))$ is a bijection from $L$ to $L$, then $\Ech_\sigma(x) = \max_{\sigma}(\Odown_L(x))$ for every $x\in L$. 
\end{proposition}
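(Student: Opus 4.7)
The plan is to invoke \cref{prop:new} with a well-chosen weight function and then upgrade the resulting pointwise inequality to equality using a double-counting argument based on the bijectivity hypothesis. Write $\alpha(x) = \max_\sigma(\Odown_L(x))$ and $p = \popdown_L(x)$; note that $p \in \Odown_L(x)$, so $\alpha(x)$ is always defined.

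Fix $x \in L$. I would define $\rr \colon \Pre_\sigma(x) \to \CC$ by
\[
\rr(w) = \begin{cases} \mu_L(p,w) & \text{if } w \in [p,x], \\ 0 & \text{otherwise.} \end{cases}
\]
This is well-defined because any $w \leq x$ satisfies $\sigma(w) \leq \sigma(x)$, so $[p,x] \subseteq \Pre_\sigma(x)$. The hypothesis $\mu_L(p,x) \neq 0$ gives $\rr(x) \neq 0$. For $u \in \Suc_\sigma(\alpha(x)) \setminus \{\alpha(x)\}$, the sum $\sum_{w \in \Pre_\sigma(x) \cap \Delta_L(u)} \rr(w)$ reduces to $\sum_{w \in [p,\, x \wedge u]} \mu_L(p,w)$ (treated as $0$ when $p \not\leq x \wedge u$), which by the defining recursion of the Möbius function equals the Kronecker delta $\delta_{p,\, x \wedge u}$. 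Now $x \wedge u = p$ is exactly the condition $u \in \Odown_L(x)$, which contradicts $\sigma(u) > \sigma(\alpha(x))$ by the maximality of $\alpha(x)$ in $\sigma$-order. So the sum vanishes, and \cref{prop:new} yields $\sigma(\Ech_\sigma(x)) \leq \sigma(\alpha(x))$ for every $x \in L$.

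Since $\Ech_\sigma$ is a bijection by construction and $\alpha$ is a bijection by hypothesis, summing the inequality over $x \in L$ gives
\[
\sum_{x \in L} \sigma(\Ech_\sigma(x)) \;=\; \sum_{x \in L} \sigma(x) \;=\; \sum_{x \in L} \sigma(\alpha(x)),
\]
forcing $\sigma(\Ech_\sigma(x)) = \sigma(\alpha(x))$, and therefore $\Ech_\sigma(x) = \alpha(x)$, for each $x \in L$.

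I expect no serious obstacle beyond guessing the right weight function: the Möbius-telescoping on $[p,\, x \wedge u]$ is precisely what makes the defining condition of $\Odown_L(x)$ surface, and the hypothesis $\mu_L(\popdown_L(x),x) \neq 0$ is exactly what is needed to keep $\rr(x)$ from collapsing. Using the one-sided \cref{prop:new} rather than the two-sided \cref{prop:labeling} avoids having to produce a dual weight $\bb$, which would otherwise require a symmetric hypothesis on some $\Oup$-type set that is not assumed here.
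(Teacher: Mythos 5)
Your proof is correct and follows the same core strategy as the paper: construct $\rr$ from the M\"obius function of the interval $[\popdown_L(x),x]$, show the relevant sums telescope to Kronecker deltas, and invoke \cref{prop:new} to get $\sigma(\Ech_\sigma(x))\leq\sigma(\alpha(x))$. The only place you diverge is in upgrading the pointwise inequality to an equality: the paper runs an induction on $\sigma(\alpha(x))$, peeling off one value at a time, whereas you observe that $\sum_x\sigma(\Ech_\sigma(x))=\sum_x\sigma(x)=\sum_x\sigma(\alpha(x))$ because both $\Ech_\sigma$ and $\alpha$ are bijections, so a pointwise $\leq$ with equal totals forces pointwise equality. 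Your summing argument is cleaner and avoids the bookkeeping of the induction; both steps are elementary and use the bijectivity hypothesis in the same essential way, so this is a cosmetic difference rather than a different route.
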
 
\begin{proof} 
For convenience, let $\alpha(x)=\max_\sigma(\Odown_L(x))$. Assume $\alpha\colon L\to L$ is a bijection. We will use \cref{prop:new} to show that \begin{equation}\label{eq:alpha}
\sigma(\Ech_\sigma(x))\leq\sigma(\alpha(x))\end{equation} for every $x\in L$.

Let $x\in L$, and let $y=\alpha(x)$. Consider the interval
$Q_x=[\popdown_L(x),x]$ of $L$. Define a map $\rr\colon\Pre_\sigma(x)\to\CC$ by 
\[\rr(w)=\begin{cases}
    \mu_{Q_x}(w) & \text{if }w\in\Pre_\sigma(x)\cap Q_x \\
    0 & \text{if }w\in\Pre_\sigma(x)\setminus Q_x. 
\end{cases}\] 
Note that $\rr(x)=\mu_{Q_x}(x)=\mu_L(\popdown_L(x),x)\neq 0$. Now consider an element $u\in\Suc_\sigma(y)\setminus\{y\}$. The hypothesis that $y=\max_{\sigma}(\Odown_L(x))$ ensures that $u\not\in\Odown_L(x)$, so $u\wedge x\neq\popdown_L(x)$. We have \[\sum_{w\in\Pre_\sigma(x)\cap\Delta_L(u)}\rr(w)=\sum_{w\in \Delta_L(u)\cap Q_x}\mu_{Q_x}(w),
\]
and we wish to show that this sum is $0$. This is obvious if $\Delta_L(u)\cap Q_x=\emptyset$, so assume $\Delta_L(u)\cap Q_x\neq\emptyset$. 
This implies that $\popdown_L(x)\leq u$, so $u\wedge x\in Q_x$. Hence, $\Delta_L(u)\cap Q_x$ is the principal lower order ideal $\Delta_{Q_x}(u\wedge x)$ of $Q_x$. Since $u\wedge x\neq\popdown_L(x)$, it follows from the definition of the M\"obius function that 
\[\sum_{w\in \Delta_L(u)\cap Q_x}\mu_{Q_x}(w)=\sum_{w\in\Delta_{Q_x}(u\wedge v)}\mu_{Q_x}(w)=0.\] It follows from \cref{prop:new} that $\sigma(\Ech_\sigma(x))\leq\sigma(y)$. This proves \eqref{eq:alpha}. 

We will now prove by induction on $\sigma(\alpha(x))$ that $\Ech_\sigma(x)=\alpha(x)$. Fix $x\in L$, and let \[{Z=\{z\in L:\sigma(\alpha(z))<\sigma(\alpha(x))\}};\] we may assume by induction that $\Ech_\sigma(z)=\alpha(z)$ for every $z\in Z$. Because $\alpha$ is a bijection, this assumption implies that \[\sigma(\Ech_\sigma(Z))=\sigma(\alpha(Z))=[\sigma(\alpha(x))-1].\] Since $x\not\in Z$, we deduce that  $\sigma(\Ech_\sigma(x))\not\in[\sigma(\alpha(x))-1]$. By \eqref{eq:alpha}, we have $\sigma(\Ech_\sigma(x))=\sigma(\alpha(x))$, so $\Ech_\sigma(x)=\alpha(x)$. 
\end{proof} 

\begin{remark}
    The condition that $\mu_L(\popdown_L(x),x)\neq 0$ is nontrivial. For example, let $L$ be the lattice shown in \cref{fig:remark}. Then $\popdown_L(\hat{1}) = \hat{0}$, but $\mu(\hat{0}, \hat{1})=0$. 
\end{remark} 

\begin{figure}[htbp]
  \begin{center}
  \includegraphics[height=1.925cm]{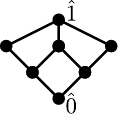}
  \end{center}
\caption{A lattice $L$ such that $\popdown_L(\hat{1}) = \hat{0}$ and $\mu(\hat{0}, \hat{1})=0$.  }\label{fig:remark}
\end{figure}

\section{Echelon-Independent Posets}\label{sec:echelon-independent} 

In this section, we prove \cref{thm:bounded,thm:fixed,thm:MacNeille}, which establish useful properties of echelon-independent posets. We also prove \cref{thm:algorithm1,thm:algorithm2}, which yield an efficient algorithm to test whether or not a poset is echelon-independent. 

\begin{lemma}\label{lem:min-to-max}
Let $R$ be a poset of size $n$. Let $x$ be a minimal element of $R$, and let $y$ be a maximal element of $R$ such that $x\leq y$. There exists a linear extension $\sigma$ of $R$ such that $\sigma(x)=1$ and $\sigma(y)=n$. Moreover, for any such linear extension $\sigma$, we have $\Ech_\sigma(x)=y$. 
\end{lemma}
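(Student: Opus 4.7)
The plan is to handle the two assertions independently. For the existence of $\sigma$, I would construct it explicitly: pick any linear extension $\tau$ of the $(n-2)$-element subposet $R\setminus\{x,y\}$, and define $\sigma(x)=1$, $\sigma(y)=n$, and $\sigma(z)=\tau(z)+1$ for all other $z$. The minimality of $x$ and the maximality of $y$ make this automatic to verify: any comparable pair not involving $x$ or $y$ is handled by $\tau$, while a relation $x\leq b$ forces $\sigma(x)=1\leq\sigma(b)$ trivially, and a relation $a\leq y$ forces $\sigma(a)\leq n=\sigma(y)$ trivially. No relation of the form $y\leq b$ with $b\neq y$ or $a\leq x$ with $a\neq x$ can occur, by minimality/maximality.

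For the main claim, I would apply \cref{prop:labeling}. Because $\sigma(x)=1$ and $\sigma(y)=n$, the sets $\Pre_\sigma(x)$ and $\Suc_\sigma(y)$ are the singletons $\{x\}$ and $\{y\}$, respectively. Define $\rr\colon\{x\}\to\CC$ by $\rr(x)=1$ and $\bb\colon\{y\}\to\CC$ by $\bb(y)=1$. Conditions (i) and (ii) of \cref{prop:labeling} are immediate. Conditions (v) and (vi) are vacuous, since their index sets $\Suc_\sigma(y)\setminus\{y\}$ and $\Pre_\sigma(x)\setminus\{x\}$ are empty. For (iii) and (iv), the hypothesis $x\leq y$ gives $x\in\Pre_\sigma(x)\cap\Delta_R(y)$ and $y\in\Suc_\sigma(y)\cap\nabla_R(x)$, so each of the two sums equals $1\neq 0$. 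Hence $\Ech_\sigma(x)=y$.

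There is no real obstacle here: the lemma is essentially a direct consequence of \cref{prop:labeling} specialized to the simplest possible setting, where the relevant lower-left submatrices of the Cartan matrix $W^{R,\sigma}$ collapse to a single entry. The reason everything is so clean is precisely that conditions (v) and (vi) become vacuous, so no Möbius-theoretic cancellation or global combinatorial input is required.
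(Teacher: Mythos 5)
Your proof is correct and takes essentially the same route as the paper: construct $\sigma$ with $x$ first and $y$ last, then apply \cref{prop:labeling} with the constant maps $\rr(x)=\bb(y)=1$, noting that $\Pre_\sigma(x)$ and $\Suc_\sigma(y)$ are singletons so that conditions (v) and (vi) are vacuous and (iii), (iv) reduce to $x\leq y$. The paper simply states that existence of $\sigma$ and verification of the six conditions are straightforward, whereas you spell both out, but the underlying argument is identical.
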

\begin{proof}
The existence of $\sigma$ is straightforward. We have $\Pre_\sigma(x)=\{x\}$, and $\Suc_\sigma(y)=\{y\}$. Define $\rr\colon\Pre_\sigma(x)\to\CC$ and $\bb\colon\Suc_\sigma(y)\to\CC$ by letting $\rr(x)=\bb(y)=1$. It is straightforward to check that the six conditions in \cref{prop:labeling} are satisfied, so $\Ech_\sigma(x)=y$. 
\end{proof}

\begin{proof}[Proof of~\cref{thm:bounded}]
Let $R$ be a connected poset that is not bounded; we will show that $R$ is not echelon-independent. We will assume that $R$ has more than one maximal element; a similar argument handles the case in which $R$ has more than one minimal element. 

We claim that $R$ has a minimal element $x$ and two distinct maximal elements $y$ and $y'$ with $y > x < y'$. The hypothesis that $R$ is connected means that, for any maximal elements $y$ and $y'$ in $R$, there is a sequence $y=r_0 > r_1 < r_2 > \cdots < r_{2 \ell} = y'$. Take the shortest such sequence for which $y$ and $y'$ are distinct maximal elements. Then $\ell\geq 1$. If $\ell>1$, then we can choose some maximal $y''$ lying above $r_2$, and then either $y > r_1 < y''$ or $y'' > r_3 < r_4 > \cdots < r_{2 \ell} = y'$ is a shorter sequence with distinct endpoints. This is impossible, so $\ell=1$. Now, take $x$ to be a minimal element that is less than or equal to $r_1$.

According to \cref{lem:min-to-max}, there exist linear extensions $\sigma$ and $\sigma'$ of $R$ such that $\Ech_\sigma(x)=y$ and $\Ech_{\sigma'}(x)=y'$. Hence, $R$ is not echelon-independent. 
\end{proof} 

\begin{proof}[Proof of~\cref{thm:fixed}] 
Let $R$ be an echelon-independent connected poset of cardinality at least $2$. We know by \cref{thm:bounded} that $R$ is bounded; let $\hat 0$ and $\hat 1$ be the minimum and maximum elements of $R$, respectively. Suppose by way of contradiction that $x$ is a fixed point of echelonmotion on $R$. 

Let $n=|R|$. There exists a linear extension $\sigma$ of $R$ such that $\Suc_\sigma(x)=\nabla_R(x)$. We must have $\sigma(\hat 0)=1$ and $\sigma(\hat 1)=n$, so it follows from \cref{lem:min-to-max} that $\Ech_\sigma(\hat 0)=\hat 1\neq\hat 0$. Therefore, $x\neq\hat 0$. According to \cref{prop:labeling}, there exist maps $\rr\colon\Pre_\sigma(x)\to\CC$ and $\bb\colon\Suc_\sigma(x)\to\CC$ satisfying the six conditions listed in the statement of that proposition (with $y=x$). Condition (iv) tells us that $\sum_{w\in\nabla_R(x)}\bb(w)\neq 0$. Setting $v=\hat 0$ in condition (vi), we find that $\sum_{w\in \nabla_R(x)\cap\nabla_R(\hat 0)}\bb(w)=0$. This is a contradiction because $\nabla_R(x)\cap \nabla_R(\hat 0)=\nabla_R(x)$.  
\end{proof}

\begin{proof}[Proof of \cref{thm:MacNeille}] 
Let $R$ be a connected poset, and let $L$ be the MacNeille completion of $R$. Suppose $L$ is not semidistributive; we wish to show that $R$ is not echelon-independent. If $R$ is not bounded, then we already know that it is not echelon-independent by \cref{thm:bounded}. Hence, we will assume $R$ is bounded. We will assume $L$ is not meet-semidistributive; an analogous dual argument handles the case in which $L$ is not join-semidistributive. According to \cite[Proposition~8.26]{Schroder16}, every element of $L$ can be written both as a join of elements of $R$ and as a meet of elements of $R$. This implies that $\JJ_L$ and $\MM_L$ are contained in $R$. 

Because $L$ is not meet-semidistributive, it follows from \cref{SDCondition} that there exists a join-irreducible element $j\in\JJ_L$ such that the set $\Odown_L(j)=\{z\in L:z\wedge j=j_*\}$ has at least two maximal elements, say $m$ and $m'$. We know by \cref{lem:max_are_meet-irreducible} that $m,m'\in\MM_L$. We will construct a linear extension $\sigma$ of $R$ such that $\Ech_\sigma(j)=m$. The same argument will show that there exists a linear extension $\sigma'$ of $R$ such that $\Ech_{\sigma'}(j)=m'$, so it will follow that $R$ is not echelon-independent. 

Let $\hat 0$ and $\hat 1$ be the minimum of $R$ and the maximum of $R$, respectively. Because the minimum element of $L$ is a meet of elements of $R$, it must be equal to $\hat 0$. Similarly, the maximum element of $L$ is a join of elements of $R$, so it must be equal to $\hat 1$. This implies that $j\neq\hat 0$ and $m\neq \hat 1$. Thus, there exists an element $p$ of $R$ that is covered by $j$ in $R$, and there exists an element $q$ of $R$ that covers $m$ in $R$. 

Let $\sigma$ be a linear extension of $R$ such that $\Pre_\sigma(j)=\Delta_R(j)$ and $\Suc_\sigma(m)=\nabla_R(m)$. Define $\rr\colon\Pre_\sigma(j)\to\CC$ by 
\[\rr(w)=\begin{cases}
    1 & \text{if }w=j \\
    -1 & \text{if }w=p \\ 
    0 & \text{otherwise}.
\end{cases}\] 
Define $\bb\colon\Suc_\sigma(m)\to\CC$ by 
\[\bb(w)=\begin{cases}
    1 & \text{if }w=m \\
    -1 & \text{if }w=q \\ 
    0 & \text{otherwise}.
\end{cases}\] 
To complete the proof, we will show that these maps satisfy the conditions in \cref{prop:labeling} (with $x=j$ and $y=m$). Conditions (i) and (ii) are immediate. 

We have $p\leq j_*\leq m$ and $j\not\leq m$ (since $m\wedge j=j_*$). Therefore, we have $p\leq m$ and $j\not\leq m$ in $R$. It follows that \[\sum_{w\in\Pre_\sigma(j)\cap\Delta_R(m)}\rr(w)=\rr(p)=-1,\] which proves condition (iii). An analogous dual argument established condition (iv). 

Let us now prove condition (v); we will omit the proof of condition (vi) since it is completely analogous. Suppose $u\in\Suc_\sigma(m)\setminus\{m\}$. By the definition of $\sigma$, we have $m<u$. Since $m$ is a maximal element of the set $\{z\in L:z\wedge j=j_*\}$, we know that $u\wedge j\neq j_*$. But $j_*\leq m<u$, so we must have $u\wedge j=j$. This shows that $j\in\Delta_R(u)$. Consequently, 
\[\sum_{w\in\Pre_\sigma(j)\cap\Delta_R(u)}\rr(w)=\rr(j)+\rr(p)=0,\] as desired. 
\end{proof} 

The converse of \cref{thm:MacNeille} is false. \cref{fig:counterexample1} shows a poset labeled by two different linear extensions $\sigma$ and $\sigma'$ such that $\Ech_\sigma\neq\Ech_{\sigma'}$. However, the MacNeille completion of this poset is a Boolean lattice of cardinality $16$, which is distributive (and therefore, semidistributive).  

\begin{figure}[htbp]
  \begin{center}
  \includegraphics[height=6.419cm]{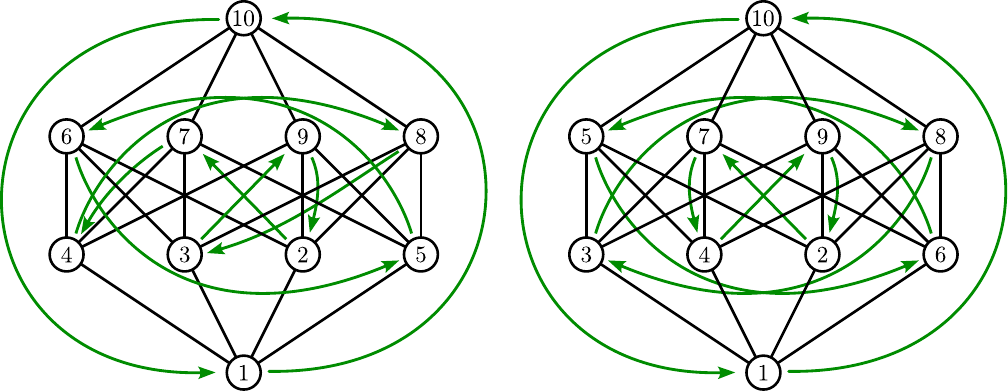} 
  \end{center}
\caption{Two linear extensions of a poset whose MacNeille completion is distributive. Echelonmotion with respect to each linear extension is represented by green arrows; note that the two echelonmotion maps are different.   }\label{fig:counterexample1} 
\end{figure}

We now provide an algorithm to test whether a poset is echelon-independent. Let us first establish a bit of notation. Let $\mathcal L(R)$ be the set of linear extensions of a poset $R$. Suppose $x$ and $y$ are elements of a poset $R$. If $x$ and $y$ are comparable, define 
\[\Lambda_1(x,y)=\{\lambda\in\mathcal L(R):\Pre_\lambda(x)=\Delta_R(x)\,\,\,\text{and}\,\,\,\Pre_\lambda(y)=\Delta_R(y)\},\] and 
\[\Lambda_2(x,y)=\{\lambda\in\mathcal L(R):\Suc_\lambda(x)=\nabla_R(x)\,\,\,\text{and}\,\,\,\Suc_\lambda(y)=\nabla_R(y)\}.\] If $x$ and $y$ are incomparable, let 
\begin{align*}
\Xi_1(x,y)&=\{\xi\in\mathcal L(R):\Pre_\xi(x)=\Delta_R(x)\,\,\,\text{and}\,\,\,\Pre_\xi(y)=\Delta_R(x)\cup\Delta_R(y)\}, \\ 
\Xi_2(x,y)&=\{\xi\in\mathcal L(R):\Pre_\xi(x)=\Delta_R(x)\cup\Delta_R(y)\,\,\,\text{and}\,\,\,\Pre_\xi(y)=\Delta_R(y)\}, \\  
\Xi_3(x,y)&=\{\xi\in\mathcal L(R):\Suc_\xi(x)=\nabla_R(x)\,\,\,\text{and}\,\,\,\Suc_\xi(y)=\nabla_R(x)\cup\nabla_R(y)\}, \\ 
\Xi_4(x,y)&=\{\xi\in\mathcal L(R):\Suc_\xi(x)=\nabla_R(x)\cup\nabla_R(y)\,\,\,\text{and}\,\,\,\Suc_\xi(y)=\nabla_R(y)\}.
\end{align*} 

\begin{proposition}\label{thm:algorithm1} 
Let $\sigma^\#$ be a linear extension of a poset $R$. Fix $x\in R$, and let $y=\Ech_{\sigma^\#}(x)$. Suppose $x$ and $y$ are comparable. Fix $\lambda_1\in\Lambda_1(x,y)$ and $\lambda_2\in\Lambda_2(x,y)$. If $\Ech_{\lambda_1}(x)=\Ech_{\lambda_2}(x)=y$, then $\Ech_\sigma(x)=y$ for every linear extension $\sigma$ of $R$. 
\end{proposition}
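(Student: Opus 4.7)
The plan is to invoke \cref{prop:labeling} twice: first in the forward direction to extract certifying functions from $\Ech_{\lambda_1}(x)=y$ and $\Ech_{\lambda_2}(x)=y$, and then in the backward direction to show that suitable zero-extensions of these functions certify $\Ech_\sigma(x)=y$ for an arbitrary linear extension $\sigma$ of $R$.

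Applying \cref{prop:labeling} to $\Ech_{\lambda_1}(x)=y$ yields maps $\rr_1$ and $\bb_1$ satisfying conditions (i)--(vi) relative to $\lambda_1$; the argument will use only $\rr_1\colon\Delta_R(x)\to\CC$. Because $\lambda_1\in\Lambda_1(x,y)$, the set $\Pre_{\lambda_1}(y)=\Delta_R(y)$ is as small as possible, so $\Suc_{\lambda_1}(y)\setminus\{y\}=R\setminus\Delta_R(y)$ is as large as possible. Consequently, condition (v) for $\lambda_1$ is actually the strong statement that $\sum_{w\in\Delta_R(x)\cap\Delta_R(u)}\rr_1(w)=0$ for \emph{every} $u\in R\setminus\Delta_R(y)$. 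Symmetrically, applying \cref{prop:labeling} to $\Ech_{\lambda_2}(x)=y$ yields $\bb_2\colon\nabla_R(y)\to\CC$ for which condition (vi) guarantees $\sum_{w\in\nabla_R(y)\cap\nabla_R(v)}\bb_2(w)=0$ for every $v\in R\setminus\nabla_R(x)$.

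Given an arbitrary linear extension $\sigma$ of $R$, I would define $\rr\colon\Pre_\sigma(x)\to\CC$ so that $\rr$ agrees with $\rr_1$ on $\Delta_R(x)$ and vanishes on $\Pre_\sigma(x)\setminus\Delta_R(x)$; this is well-defined since $\Delta_R(x)\subseteq\Pre_\sigma(x)$. Define $\bb\colon\Suc_\sigma(y)\to\CC$ analogously from $\bb_2$, using $\nabla_R(y)\subseteq\Suc_\sigma(y)$. Every sum of the form $\sum_{w\in\Pre_\sigma(x)\cap\Delta_R(z)}\rr(w)$ then collapses, via the zero-extension, to $\sum_{w\in\Delta_R(x)\cap\Delta_R(z)}\rr_1(w)$, so conditions (i) and (iii) of \cref{prop:labeling} for $\sigma$ follow directly from the corresponding conditions for $\lambda_1$. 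Conditions (ii) and (iv) reduce identically to those for $\lambda_2$.

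The step meriting attention is condition (v) (and symmetrically (vi)). A typical $u\in\Suc_\sigma(y)\setminus\{y\}$ satisfies $\sigma(u)>\sigma(y)$, which forces $u\not\leq y$, and hence $u\in R\setminus\Delta_R(y)$. Condition (v) for $\lambda_1$ then yields $\sum_{w\in\Delta_R(x)\cap\Delta_R(u)}\rr_1(w)=0$, which is exactly condition (v) for $\sigma$. The dual argument, using that $v\in\Pre_\sigma(x)\setminus\{x\}$ forces $v\in R\setminus\nabla_R(x)$, handles (vi) via $\lambda_2$. Once all six conditions are verified, \cref{prop:labeling} delivers $\Ech_\sigma(x)=y$. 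The conceptual heart---and the only real obstacle---is recognizing that $\lambda_1$ and $\lambda_2$ were chosen with $\Pre_{\lambda_1}(y)$ and $\Suc_{\lambda_2}(x)$ as small as possible, which is precisely what makes their (v) and (vi) conditions strong enough to cover every witness $u$ or $v$ that can arise for an arbitrary $\sigma$.
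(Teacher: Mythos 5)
Your proof is correct and follows essentially the same strategy as the paper's: extract the $\rr$-side certificate from $\lambda_1$ and the $\bb$-side certificate from $\lambda_2$ via \cref{prop:labeling}, zero-extend each to the domain required by an arbitrary linear extension $\sigma$, and observe that the containments $\Delta_R(x)\subseteq\Pre_\sigma(x)$, $\Suc_\sigma(y)\setminus\{y\}\subseteq R\setminus\Delta_R(y)$ (and their duals) make all six conditions carry over. The paper states these inclusions in one line and leaves the verification as ``straightforward,'' whereas you spell out the key point that the condition~(v)/(vi) witnesses for $\lambda_1$/$\lambda_2$ range over maximal sets; the underlying argument is identical.
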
  
\begin{proof}
Let $\sigma$ be a linear extension of $R$. Because $\Ech_{\lambda_1}(x)=y$, we know by \cref{prop:labeling} that there exists a map $\widetilde\rr\colon\Pre_{\lambda_1}(x)\to\CC$ satisfying $\widetilde\rr(x)\neq 0$ and $\sum_{w\in\Pre_{\lambda_1}(x)\cap\Delta_R(y)}\widetilde\rr(w)\neq 0$ and satisfying $\sum_{w\in\Pre_{\lambda_1}(x)\cap\Delta_R(u)}\widetilde\rr(w) =0$ for every $u\in\Suc_{\lambda_1}(y)\setminus\{y\}$. Because $\lambda_1\in\Lambda_1(x,y)$, we have \[\Pre_{\lambda_1}(x)=\Delta_R(x)\subseteq\Pre_\sigma(x)\quad\text{and}\quad \Suc_{\lambda_1}(y)\setminus\{y\}=R\setminus\Delta_R(y)\supseteq\Suc_\sigma(y)\setminus\{y\}.\] Define a map $\rr\colon\Pre_\sigma(x)\to\CC$ so that $\rr(w)=\widetilde\rr(w)$ for all $w\in\Delta_R(x)$ and $\rr(w)=0$ for all ${w\in\Pre_\sigma(x)\setminus\Delta_R(x)}$. It is straightforward to verify that $\rr$ satisfies conditions (i), (iii), and (v) from \cref{prop:labeling}. 

Because $\Ech_{\lambda_2}(x)=y$, we know by \cref{prop:labeling} that there exists a map $\widetilde\bb\colon\Suc_{\lambda_2}(y)\to\CC$ satisfying $\widetilde\bb(y)\neq 0$ and $\sum_{w\in\Suc_{\lambda_2}(y)\cap\nabla_R(x)}\widetilde\bb(w)\neq 0$ and satisfying $\sum_{w\in\Suc_{\lambda_2}(y)\cap\nabla_R(v)}\widetilde\bb(w)=0$ for every $v\in\Pre_{\lambda_2}(x)\setminus\{x\}$. Because $\lambda_2\in\Lambda_2(x,y)$, we have 
\[\Suc_{\lambda_2}(y)=\nabla_R(y)\subseteq\Suc_\sigma(y)\quad\text{and}\quad \Pre_{\lambda_2}(x)\setminus\{x\}=R\setminus\nabla_R(x)\supseteq\Pre_\sigma(x)\setminus\{x\}.\] Define a map $\bb\colon\Suc_\sigma(y)\to\CC$ so that $\bb(w)=\widetilde\bb(w)$ for all $w\in\nabla_R(y)$ and $\bb(w)=0$ for all ${w\in\Suc_\sigma(y)\setminus\nabla_R(y)}$. It is straightforward to verify that $\bb$ satisfies conditions (ii), (iv), and (vi) from \cref{prop:labeling}. We deduce from that proposition that $\Ech_\sigma(x)=y$. 
\end{proof} 

\begin{proposition}\label{thm:algorithm2} 
Let $\sigma^\#$ be a linear extension of a poset $R$. Fix $x\in R$, and let $y=\Ech_{\sigma^\#}(x)$. Suppose $x$ and $y$ are incomparable. For each $k\in\{1,2,3,4\}$, fix a linear extension $\xi_k\in\Xi_k(x,y)$. If $\Ech_{\xi_1}(x)=\Ech_{\xi_2}(x)=\Ech_{\xi_3}(x)=\Ech_{\xi_4}(x)=y$, then $\Ech_\sigma(x)=y$ for every linear extension $\sigma$ of~$R$. 
\end{proposition}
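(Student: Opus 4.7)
The plan is to adapt the strategy of Proposition~\ref{thm:algorithm1}, replacing its single case with two symmetric cases according to whether $\sigma(x)<\sigma(y)$ or $\sigma(x)>\sigma(y)$ in the target linear extension $\sigma$. In each case, one of the $\xi_k$'s will supply a witness map $\widetilde\rr$ satisfying conditions (i), (iii), and (v) of Proposition~\ref{prop:labeling}, and another will supply a map $\widetilde\bb$ satisfying (ii), (iv), and (vi); I then extend each map by zero to obtain $\rr\colon\Pre_\sigma(x)\to\CC$ and $\bb\colon\Suc_\sigma(y)\to\CC$ satisfying all six hypotheses of Proposition~\ref{prop:labeling}, whence $\Ech_\sigma(x)=y$.

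The assignment of the $\xi_k$'s is dictated by the containments needed for the zero-extension trick to work, namely $\Pre_{\xi_k}(x)\subseteq\Pre_\sigma(x)$ and $\Suc_{\xi_k}(y)\setminus\{y\}\supseteq\Suc_\sigma(y)\setminus\{y\}$ (for the $\rr$-side) together with the dual inclusions $\Suc_{\xi_\ell}(y)\subseteq\Suc_\sigma(y)$ and $\Pre_{\xi_\ell}(x)\setminus\{x\}\supseteq\Pre_\sigma(x)\setminus\{x\}$ (for the $\bb$-side). If $\sigma(x)<\sigma(y)$, I take $k=1$ and $\ell=4$: the containments $\Delta_R(x)\subseteq\Pre_\sigma(x)$ and $\nabla_R(y)\subseteq\Suc_\sigma(y)$ are automatic for every linear extension, while $\Delta_R(x)\cup\Delta_R(y)\subseteq\Pre_\sigma(y)$ and $\nabla_R(x)\cup\nabla_R(y)\subseteq\Suc_\sigma(x)$ both follow immediately from the inequality $\sigma(x)<\sigma(y)$. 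The symmetric case $\sigma(x)>\sigma(y)$ uses $k=2$ and $\ell=3$, and the four corresponding containments are verified analogously.

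With the containments in hand, the argument proceeds exactly as in the proof of Proposition~\ref{thm:algorithm1}: define $\rr$ to agree with $\widetilde\rr$ on $\Pre_{\xi_k}(x)$ and to vanish on $\Pre_\sigma(x)\setminus\Pre_{\xi_k}(x)$, and define $\bb$ symmetrically. Conditions (i) and (ii) are immediate from $\widetilde\rr(x)\neq 0$ and $\widetilde\bb(y)\neq 0$. For (iii), the sum $\sum_{w\in\Pre_\sigma(x)\cap\Delta_R(y)}\rr(w)$ collapses to $\sum_{w\in\Pre_{\xi_k}(x)\cap\Delta_R(y)}\widetilde\rr(w)$, which is nonzero by hypothesis on $\xi_k$. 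For (v), each $u\in\Suc_\sigma(y)\setminus\{y\}$ lies in $\Suc_{\xi_k}(y)\setminus\{y\}$, so the vanishing relation coming from $\xi_k$ transfers verbatim. Conditions (iv) and (vi) follow by dual reasoning from $\xi_\ell$.

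The main obstacle is purely organizational rather than conceptual: one must correctly select the pair $(\xi_k,\xi_\ell)$ for each possible relative position of $x$ and $y$ in $\sigma$, and verify the four containment facts above. Four linear extensions are required, as opposed to the two used in Proposition~\ref{thm:algorithm1}, because when $x$ and $y$ are incomparable there are two possible orderings of $x$ and $y$ within $\sigma$, and each ordering requires its own $\xi_k$ on the $\rr$-side together with its own $\xi_\ell$ on the $\bb$-side.
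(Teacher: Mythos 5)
Your proposal is correct and follows essentially the same route as the paper's proof: split into the two cases $\sigma(x)<\sigma(y)$ and $\sigma(y)<\sigma(x)$, use the pair $(\xi_1,\xi_4)$ in the first case and $(\xi_2,\xi_3)$ in the second, verify the four containment facts, and then run the same extend-by-zero construction from Proposition~\ref{thm:algorithm1} to produce $\rr$ and $\bb$ satisfying the six conditions of Proposition~\ref{prop:labeling}. The paper's argument in the $\sigma(x)<\sigma(y)$ case records the containments as $\Pre_{\xi_1}(x)=\Delta_R(x)\subseteq\Pre_\sigma(x)$, $\Suc_{\xi_1}(y)\setminus\{y\}=R\setminus(\Delta_R(x)\cup\Delta_R(y))\supseteq\Suc_\sigma(y)\setminus\{y\}$, $\Suc_{\xi_4}(y)=\nabla_R(y)\subseteq\Suc_\sigma(y)$, and $\Pre_{\xi_4}(x)\setminus\{x\}=R\setminus(\nabla_R(x)\cup\nabla_R(y))\supseteq\Pre_\sigma(x)\setminus\{x\}$, which are precisely the complement forms of the inclusions you cite.
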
  
\begin{proof}
Let $\sigma$ be a linear extension of $R$; we will show that $\Ech_\sigma(x)=y$. There are two cases to consider depending on whether $\sigma(x)<\sigma(y)$ or $\sigma(y)<\sigma(x)$. We will assume that $\sigma(x)<\sigma(y)$ and use the linear extensions $\xi_1$ and $\xi_4$ to complete the proof. (The argument in the other case is very similar, but it uses $\xi_2$ and $\xi_3$ instead of $\xi_1$ and $\xi_4$.) 

Note that 
\[\Pre_{\xi_1}(x)=\Delta_R(x)\subseteq\Pre_\sigma(x)\quad\text{and}\quad\Suc_{\xi_1}(y)\setminus\{y\}=R\setminus(\Delta_R(x)\cup\Delta_R(y))\supseteq\Suc_\sigma(y)\setminus\{y\}.\] This allows us to construct a map $\rr\colon\Pre_\sigma(x)\to\CC$ satisfying conditions (i), (iii), and (v) from \cref{prop:labeling}; the construction is essentially the same as in the proof of \cref{thm:algorithm1} (except we use $\xi_1$ instead of $\lambda_1$).  
Next, note that 
\[\Suc_{\xi_4}(y)=\nabla_R(y)\subseteq\Suc_\sigma(y)\quad\text{and}\quad\Pre_{\xi_4}(x)\setminus\{x\}=R\setminus(\nabla_R(x)\cup\nabla_R(y))\supseteq\Pre_\sigma(x)\setminus\{x\}.\]
This allows us to construct a map $\bb\colon\Suc_\sigma(y)\to\CC$ satisfying conditions (ii), (iv), and (vi) from \cref{prop:labeling}. We deduce from that proposition that $\Ech_\sigma(x)=y$. 
\end{proof} 

To use \cref{thm:algorithm1,thm:algorithm2} in practice to test whether or not an $n$-element poset $R$ is echelon-independent, one first chooses an arbitrary linear extension $\sigma^\#$ and computes $\Ech_{\sigma^\#}$. For each $x\in R$ such that $x$ and $\Ech_{\sigma^\#}(x)$ are comparable, one can use \cref{thm:algorithm1} to test whether or not $\Ech_\sigma(x)=\Ech_{\sigma^\#}(x)$ for every linear extension $\sigma$ of $R$; this requires one to test whether $\Ech_{\lambda_1}(x)=\Ech_{\lambda_2}(x)=y$ for just two specific linear extensions $\lambda_1$ and $\lambda_2$. By \eqref{eq:ranks}, this amounts to computing the ranks of $8$ matrices ($4$ for each linear extension), each of which has at most $n$ rows and at most $n$ columns. For each $x\in R$ such that $x$ and $\Ech_{\sigma^\#}(x)$ are not comparable, one can use \cref{thm:algorithm2} to test whether or not $\Ech_\sigma(x)=\Ech_{\sigma^\#}(x)$ for every linear extension $\sigma$ of $R$; this requires testing whether $\Ech_{\xi_1}(x)=\Ech_{\xi_2}(x)=\Ech_{\xi_3}(x)=\Ech_{\xi_4}(x)=y$ for just four specific linear extensions $\xi_1,\xi_2,\xi_3,\xi_4$. By \eqref{eq:ranks}, this amounts to computing the ranks of $16$ matrices ($4$ for each linear extension), each of which has at most $n$ rows and at most $n$ columns. Overall, the entire algorithm requires computing echelonmotion with respect to the initial linear extension $\sigma^\#$ and then computing the ranks of at most $16n$ matrices, each of which has at most $n$ rows and at most $n$ columns. This is substantially more efficient than computing echelonmotion separately with respect to all linear extensions of $R$.

\begin{remark}
Using the above algorithm, we have checked that the (strong) Bruhat order on the symmetric group $S_n$ is echelon-independent when $n\leq 5$ but not when $n=6$. It is well known that the MacNeille completion of the Bruhat order on $S_n$ is distributive (it is isomorphic to the standard order on alternating sign matrices \cite{LS}), so the Bruhat order on $S_6$ is another counterexample to the converse of \cref{thm:MacNeille}. 

Let us give more details of our computation. We write permutations in one-line notation, so $[w_1 w_2 \cdots w_n]$ means $1 \mapsto w_1$, $2 \mapsto w_2$, etc. \texttt{SAGE}'s default order on permutations, denoted $\sigma_{\texttt{SAGE}}$, is lexicographic order on these one-line notations. Let $x=[2 4 1 6 3 5]$ and $y=[5 1 3 2 6 4]$.  We have $\Ech_{\sigma_{\texttt{SAGE}}}(x) = y$. 
The elements $x$ and $y$ are incomparable in Bruhat order. If we choose an order $\sigma_1 \in \Xi_1(x,y)$, then we have $\Ech_{\sigma_{1}}(x) = [315462] \neq y$. 
\end{remark} 

\section{Semidistributive Lattices}\label{sec:semidistributive} 

Let $L$ be a semidistributive lattice. There is a canonical labeling of the edges of the Hasse diagram of $L$ with join-irreducible elements of $L$. The label of the edge $x\lessdot y$, denoted $j_{x,y}$, is the minimum element of the set $\{z\in L:z\vee x=y\}$. For each element $w\in L$, we define the \dfn{downward label set} $\DD_L(w)$ and the \dfn{upward label set} $\UU_L(w)$ by 
\[\DD_L(w)=\{j_{x,w}:x\lessdot w\}\quad\text{and}\quad \UU_L(w)=\{j_{w,y}:w\lessdot y\}.\] It is known that any two distinct elements of $L$ have different downward label sets and different upward label sets. Moreover, the sets $\{\DD_L(w):w\in L\}$ and $\{\UU_L(w):w\in L\}$ are equal and form a flag simplicial complex called the \dfn{canonical join complex} of $L$. Barnard \cite{B19} defined \dfn{rowmotion} to be the unique bijection $\row_L\colon L\to L$ such that 
\[\UU_L(\row_L(w))=\DD_L(w)\] for all $w\in L$; see \cref{fig:semidistributive_rowmotion}. Because $L$ is meet-semidistributive, the set \[\Odown(x)=\{z\in L:z\wedge x=\popdown_L(x)\}\] has a maximum element; Defant and Williams \cite{DW23} found that this maximum element is $\row_L(x)$. In fact, they showed that \begin{equation}\label{eq:pop=row}
\max\Odown(x)=\{\row_L(x)\}.
\end{equation} 
This alternative characterization of rowmotion will allow us to prove \cref{semidist-thm}. 

\begin{figure}[htbp]
  \begin{center}
  \includegraphics[height=5.2cm]{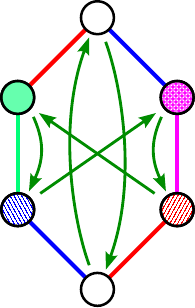}
  \end{center}
\caption{A semidistributive lattice. Each join-irreducible element has its own color, which is also used to color the edges that it labels. A green arrow is drawn from each element to its image under rowmotion.  }\label{fig:semidistributive_rowmotion}
\end{figure}

\begin{proof}[Proof of~\cref{semidist-thm}] 
Fix a linear extension $\sigma$ of a semidistributive lattice $L$. Since semidistributive lattices are semidistrim, we know by \cref{lem:Mobius_nonzero} that $\mu_L(\popdown_L(x),x)\neq 0$ for every $x\in L$. In addition, \eqref{eq:pop=row} tells us that 
\[\textstyle{\max_\sigma} (\Odown_L(x))=\row_L(x)\] for every $x\in L$. Since rowmotion is a bijection, it follows from \cref{cor:alpha} that $\Ech_\sigma=\row_L$.  

The fact that every echelon-independent lattice is semidistributive follows from \cref{thm:MacNeille}.
\end{proof} 

\section{Trim Lattices}\label{sec:trim}   

Our goal in this section is to define vertebral linear extensions of trim lattices and to prove \cref{thm:trim}. Let us start by collecting several useful notions and results regarding trim lattices, many of which come from \cite{T06,TW19,DW23}. \cref{fig:trim} illustrates many of the definitions. 

\begin{lemma}[{\cite[Theorem~1]{T06}}]
Every interval in a trim lattice is a trim lattice. 
\end{lemma}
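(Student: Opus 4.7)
The plan is to prove that $[a,b]$ is both left modular and extremal. Fix a maximal chain of left modular elements $\hat 0 = c_0 \lessdot c_1 \lessdot \cdots \lessdot c_n = \hat 1$ in $L$. The natural candidate for a left modular chain in $[a,b]$ is obtained by ``projecting'' this chain, setting $d_i = (c_i \vee a) \wedge b$. A preliminary observation (which we should record first) is that by left modularity of $c_i$ applied with $y = a, z = b$, we have the equality $(c_i \vee a) \wedge b = a \vee (c_i \wedge b)$, so the two natural definitions coincide. This gives a monotone sequence $a = d_0 \leq d_1 \leq \cdots \leq d_n = b$ in $[a,b]$.

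The first main step is to show that each $d_i$ is left modular in $[a,b]$. The idea is to establish two intermediate facts: if $x$ is left modular in $L$, then $x \wedge b$ is left modular in the lower interval $[\hat 0, b]$, and dually $x \vee a$ is left modular in the upper interval $[a, \hat 1]$. The first of these is a short calculation: for $y \leq z \leq b$, one uses $(y \vee x) \wedge z = y \vee (x \wedge z)$ together with $z \leq b$ to conclude $(y \vee (x \wedge b)) \wedge z = y \vee (x \wedge z) = y \vee ((x \wedge b) \wedge z)$. Iterating these two restrictions yields left modularity of $d_i$ in $[a,b]$.

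The second main step is to show that after removing duplicates, the $d_i$'s form a \emph{maximal} chain in $[a,b]$, and that their number (i.e., the number of indices $i$ with $d_{i-1} \neq d_i$) equals both $|\JJ_{[a,b]}|$ and $|\MM_{[a,b]}|$, which will yield extremality. For the first, I would show by contradiction that whenever $d_{i-1} \neq d_i$, the pair $d_{i-1} \lessdot d_i$ is a cover in $[a,b]$: any element strictly between them, together with left modularity of $c_i$, would produce an element strictly between $c_{i-1}$ and $c_i$, violating that $c_{i-1} \lessdot c_i$. For the counting, I would use the standard edge-labeling of a trim lattice by join-irreducibles (each edge $c_{i-1} \lessdot c_i$ carries a unique join-irreducible label), and show that the labels appearing on nontrivial projected edges $d_{i-1} \lessdot d_i$ are precisely the join-irreducibles of $[a,b]$. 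A dual argument using meet-irreducibles handles $|\MM_{[a,b]}|$.

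The main obstacle, in my view, is the counting of join-irreducibles in $[a,b]$. Showing that every join-irreducible of $[a,b]$ arises as the label of some projected edge requires a careful argument: given $j \in \JJ_{[a,b]}$ with $j_* \lessdot j$ in $[a,b]$, one must locate the index $i$ for which $c_{i-1}$ and $c_i$ witness this cover under the projection $c \mapsto (c \vee a) \wedge b$, and verify uniqueness. Once this bijective correspondence is established, extremality follows immediately, since the distinct $d_i$'s yield a maximal chain of the correct length.
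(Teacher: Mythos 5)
The paper does not actually prove this lemma: it is cited verbatim as \cite[Theorem~1]{T06}, so there is no internal proof to compare against. That said, your outline does follow what I believe is Thomas's own strategy: project the left modular chain $c_0 \lessdot \cdots \lessdot c_n$ into $[a,b]$ via $d_i = (c_i \vee a) \wedge b$, verify left modularity of the $d_i$ by restricting to a lower interval and then to an upper interval, and finally count. Your preliminary observation $(c_i \vee a) \wedge b = a \vee (c_i \wedge b)$ and the two left-modularity-inheritance facts (for $x\wedge b$ in $[\hat 0,b]$ and for $x\vee a$ in $[a,\hat 1]$, the second by the self-duality of the left-modularity condition) are correct, and the iteration works.

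The substantive gap is, as you acknowledge, in the extremality count. Two things should be said more carefully. First, be careful that extremality is about a \emph{maximum-length} chain, not merely a maximal (saturated) one; what you need is that the chain of distinct $d_i$'s has length $k' = |\JJ_{[a,b]}| = |\MM_{[a,b]}|$. (Since in any finite lattice every unrefinable chain from $\hat 0$ to $\hat 1$ has length at most $\min(|\JJ_L|,|\MM_L|)$---each cover contributes a \emph{distinct} join-irreducible by a minimality/injectivity argument---exhibiting such a chain of length $k'$ does establish extremality.) Second, the phrase ``the labels appearing on nontrivial projected edges are precisely the join-irreducibles of $[a,b]$'' conflates two different kinds of objects: the label $\jj_{d_{i-1},d_i}$ is a join-irreducible of $L$ (possibly outside $[a,b]$), whereas $\JJ_{[a,b]}$ consists of elements of $[a,b]$ that are join-irreducible \emph{in the interval}. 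The correspondence between them is not the identity; one must show that the edge labels appearing inside the interval are in canonical bijection with $\JJ_{[a,b]}$, and that this set of labels has the right cardinality. This is exactly where left modularity is doing real work --- Markowsky's example shows that extremality alone is \emph{not} inherited by intervals, so a counting argument that only uses the existence of a max-length chain cannot succeed. You would need an argument along the lines of: for each index $i$, $d_{i-1}=d_i$ if and only if the label $j_i$ fails a suitable condition relative to $a$ and $b$ (tied to the map $\kappa_L$ and the Galois structure), and then show the surviving labels biject with $\JJ_{[a,b]}$ and, dually, with $\MM_{[a,b]}$. Until that is written out, the proof is incomplete.
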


Let $L$ be a trim lattice, and let $k=|\JJ_L|=|\MM_L|$. Because $L$ is extremal, its maximum-length chains have cardinality $k+1$. The \dfn{spine} of $L$ is the union of the maximum-length chains of $L$. Let $\C=\{\hat 0=u_0\lessdot u_1\lessdot u_2\lessdot\cdots\lessdot u_k=\hat 1\}$ be a maximum-length chain of $L$. For each $i\in[k]$, there is a unique join-irreducible element $j_i\in\JJ_L$ such that $j_i\vee u_{i-1}=u_i$, and there is a unique meet-irreducible element $m_i\in\MM_L$ such that $m_i\wedge u_i=u_{i-1}$. This yields a bijection $\kappa_L\colon \JJ_L\to\MM_L$ defined so that $\kappa_L(j_i)=m_i$ for all $i\in[k]$. For each cover relation $x\lessdot y$ in $L$, let \begin{equation}\label{eq:gamma}\gamma_{\C}(x\lessdot y)=\min\{i\in[k]:u_i\vee x\geq y\},\end{equation}
and label the edge between $x$ and $y$ with the join-irreducible $\jj_{x,y}=j_{\gamma(x\lessdot y)}$. (See \cref{fig:trim}.) 
According to \cite[Proposition~3.8]{TW19}, the edge label $\jj_{x,y}$ does not depend on the choice of the maximum-length chain $\C$. In addition, $\jj_{j_*,j}=\jj_{\kappa_L(j),\kappa_L(j)^*}=j$ for every $j\in\JJ_L$, so the bijection $\kappa_L$ is also independent of the choice of $\C$. Given an interval $L'$ of $L$, define $\Gamma_\C(L')$ to be the set of integers $\gamma_\C(x\lessdot y)$ such that $x\lessdot y$ is a cover relation in $L'$. 

The next result is a consequence of \cite[Propositions~1~\&~5]{T06}, which build off of results from~\cite{MT}. 

\begin{lemma}[{\cite[Propositions~1~\&~5]{T06}}]\label{prop:1and5}
Let $\C$ be a maximum-length chain of a trim lattice $L$. Let $L'=[v,w]$ be an interval in $L$, viewed as a trim lattice. The set $\C'=\{(v\vee u)\wedge w:u\in\C\}$ is a maximum-length chain of $L'$. Let $k'=|\JJ_{L'}|=|\MM_{L'}|=|\C'|-1$. Then $|\Gamma_{\C}(L')|=k'$. For each cover relation $x\lessdot y$ in $L'$, we have $\gamma_{\C'}(x\lessdot y)=\varphi(\gamma_\C(x\lessdot y))$, where $\varphi$ is the unique order-preserving bijection from $\Gamma_\C(L')$ to $[k']$. 
\end{lemma}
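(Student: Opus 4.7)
The plan is to reduce to a convenient choice of $\C$ and then verify each claim directly, following the strategy of~\cite{T06}. By Proposition~3.8 of~\cite{TW19} (cited above), the value of $\gamma_\C$ on cover relations is independent of the maximum-length chain chosen, so I may assume $\C$ is a chain of left modular elements --- such a chain exists by the definition of trim lattice. Let $\phi\colon L\to L'$ denote the map $\phi(u)=(v\vee u)\wedge w$, so that $\C'=\phi(\C)$.

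The first claim --- that $\C'$ is a chain in $L'=[v,w]$ running from $v$ to $w$ --- is immediate from monotonicity of $\phi$ together with the identities $\phi(\hat 0)=v\wedge w=v$ (using $v\leq w$) and $\phi(\hat 1)=w$.

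For the second claim, I would analyze consecutive pairs $u_{i-1}\lessdot u_i$ in $\C$ and show, using left modularity, that $\phi(u_{i-1})$ and $\phi(u_i)$ either coincide or form a cover relation in $L'$. Moreover, $\phi(u_{i-1})<\phi(u_i)$ holds precisely when, setting $x=\phi(u_{i-1})$ and $y=\phi(u_i)$, one has $\gamma_\C(x\lessdot y)=i$; this identifies the ``non-collapsing'' indices with $\Gamma_\C(L')$, so the number of distinct elements of $\C'$ is $|\Gamma_\C(L')|+1$. To conclude $|\Gamma_\C(L')|=k'$, I would show that the contracted chain is of maximum length in $L'$: any strictly longer chain in $L'$ could be concatenated with maximum-length chains from $\hat 0$ to $v$ and from $w$ to $\hat 1$ inside $L$, producing a chain in $L$ of length strictly greater than $k$ and contradicting extremality.

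The final identity $\gamma_{\C'}(x\lessdot y)=\varphi(\gamma_\C(x\lessdot y))$ then follows by bookkeeping: listing the distinct elements of $\C'$ in increasing order corresponds to listing $\Gamma_\C(L')=\{i_1<\cdots<i_{k'}\}$ in increasing order, so a cover $x\lessdot y$ in $L'$ with $\gamma_\C(x\lessdot y)=i_\ell$ occurs at the $\ell$-th step of $\C'$, giving $\gamma_{\C'}(x\lessdot y)=\ell=\varphi(i_\ell)$. The principal difficulty is the left-modularity argument ruling out intermediate elements in each interval $[\phi(u_{i-1}),\phi(u_i)]$: it uses the identity $(y\vee u_{i-1})\wedge u_i=y\vee(u_{i-1}\wedge u_i)$ applied to well-chosen $y$, and it is precisely this step that transports extremality of $L$ to extremality of $L'$ through the projection $\phi$.
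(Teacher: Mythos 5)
The paper itself supplies no proof of this lemma: it is imported wholesale from Propositions~1 and~5 of \cite{T06}, so comparing ``against the paper's proof'' really means comparing against Thomas's original argument. Your sketch captures the right overall strategy — push the chain $\C$ into $L'=[v,w]$ via the weakly order-preserving map $\phi(u)=(v\vee u)\wedge w$, use left modularity to argue that each cover either collapses or survives, and then count — and this is indeed how \cite{T06} proceeds. Two steps, however, are genuinely incomplete.

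First, the appeal to \cite[Proposition~3.8]{TW19} misjustifies the reduction to a left-modular chain. That proposition gives chain-independence of $\gamma_\C$, which suffices for the statements about $\gamma_\C$ and $\Gamma_\C(L')$, but it says nothing about $\C'=\phi(\C)$, which manifestly depends on $\C$. What actually licenses the reduction is a different fact from the trim-lattice structure theory: the spine of a trim lattice (the union of all maximum-length chains) coincides with its set of left-modular elements, so \emph{every} maximum-length chain is already left modular. You should invoke that directly rather than $\gamma$-independence.

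Second, your identification of the ``non-collapsing indices'' with $\Gamma_\C(L')$ is only argued in one direction. If $\phi(u_{i-1})\lessdot\phi(u_i)$, then certainly $\gamma_\C$ of that cover equals $i$ and hence $i\in\Gamma_\C(L')$; but you also need the converse, namely that for \emph{any} cover $x\lessdot y$ in $L'$ with $\gamma_\C(x\lessdot y)=i$ one has $\phi(u_{i-1})\neq\phi(u_i)$. This is the crux of $|\Gamma_\C(L')|=k'$ and should not be elided. Relatedly, the concatenation argument as written gives an inequality in the wrong direction: splicing a longer chain of $L'$ with maximum-length chains of $[\hat 0,v]$ and $[w,\hat 1]$ exceeds $k$ only if you already know those three maximum lengths sum to exactly $k$. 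The clean route (and the one hiding inside \cite{T06}) is to show, via left modularity, that each cover $u_{i-1}\lessdot u_i$ of $\C$ survives under exactly one of the three projections $u\mapsto u\wedge v$, $u\mapsto\phi(u)$, $u\mapsto u\vee w$; that trichotomy is the missing step that transfers extremality from $L$ to $L'$ and yields the count simultaneously.
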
 

For an element $u$ in a trim lattice $L$, define 
\[\DD_L(u)=\{\jj_{x,u}:x\in\Cov_L^{\downarrow}(u)\}\quad\text{and}\quad \UU_L(u)=\{\jj_{u,y}:y\in\Cov_L^\uparrow(u)\}.\] 

\begin{lemma}[{\cite[Theorem~5.6]{DW23}}]\label{lem:join_and_meet} 
Let $L$ be a trim lattice. For each $x\in L$, we have 
\[x=\bigvee\DD_L(x)=\bigwedge\kappa_L(\UU_L(x)).\] 
\end{lemma}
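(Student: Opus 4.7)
The plan is to verify that for every cover $x' \lessdot x$ in $L$ the label $\jj_{x',x}$ satisfies the two properties
\[\jj_{x',x} \leq x \quad \text{and} \quad \jj_{x',x} \vee x' = x,\]
together with the dual statement that for every cover $x \lessdot y$, $\kappa_L(\jj_{x,y}) \geq x$ and $\kappa_L(\jj_{x,y}) \wedge y = x$. Given these identities, each of the two equalities in the lemma drops out from a one-line finite-lattice cover argument: let $y := \bigvee \DD_L(x)$. The first half of the identity forces $y \leq x$. If $y < x$, then finiteness of $L$ yields a saturated chain $y \leq x^\star \lessdot x$, and then $\jj_{x^\star, x} \leq y \leq x^\star$ gives $\jj_{x^\star, x} \vee x^\star \leq x^\star < x$, contradicting the second half. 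Hence $y = x$, and the meet equality follows by the symmetric argument applied to the dual identity.

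To establish the first identity, fix a maximum-length chain $\C = \{\hat 0 = u_0 \lessdot \cdots \lessdot u_k = \hat 1\}$ of left modular elements and set $i = \gamma_\C(x' \lessdot x)$. Applying the left modular identity $(a \vee u) \wedge b = a \vee (u \wedge b)$ with $a = x', b = x$, first to $u = u_{i-1}$ and then to $u = u_i$, yields
\[(u_{i-1} \vee x') \wedge x = x' \vee (u_{i-1} \wedge x), \qquad (u_i \vee x') \wedge x = x' \vee (u_i \wedge x).\]
The minimality in the definition of $\gamma_\C$ gives $u_{i-1} \vee x' \not\geq x$, so the first left side is strictly below $x$ while containing $x'$; since $x' \lessdot x$, this forces $(u_{i-1} \vee x') \wedge x = x'$, and hence $u_{i-1} \wedge x \leq x'$. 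The second left side equals $x$ since $u_i \vee x' \geq x$. Combined with the defining relation $u_i = j_i \vee u_{i-1}$, these two equations immediately give $\jj_{x',x} \vee x' = j_i \vee x' = x$; the bound $\jj_{x',x} \leq x$ is then obtained by invoking \cref{prop:1and5} to compare the label $\jj_{x',x}$ computed in $L$ via $\C$ with the analogous label computed in the sub-trim-lattice $[\hat 0, x]$ via $\C' = \{u_l \wedge x\}_l$, where membership in $[\hat 0, x]$ is automatic. The dual identity is proved symmetrically, using the dual trim structure on $L^*$.

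The main obstacle is precisely the bound $\jj_{x',x} \leq x$. The join equality $\jj_{x',x} \vee x' = x$ repackages the minimality of $\gamma_\C$ through the left modular identity and is essentially automatic. Bounding the label itself, however, requires both halves of the trim hypothesis acting in concert: the extremal part furnishes $\C$ and the labeling, and the left modular part drives the identities above, but the actual containment passes through \cref{prop:1and5} to match the join-irreducible $j_i \in \JJ_L$ computed globally with the corresponding join-irreducible of the sub-trim-lattice $[\hat 0, x]$ (relying also on the chain-independence of $\jj$ from \cite[Proposition~3.8]{TW19}). Once this matching is in hand, the cover argument of the first paragraph concludes the proof.
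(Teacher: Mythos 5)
The paper does not actually prove this lemma; it is cited verbatim from \cite[Theorem~5.6]{DW23}. So I am judging your argument on its own terms. Your overall plan---verify $\jj_{x',x}\leq x$ and $\jj_{x',x}\vee x'=x$ for each cover $x'\lessdot x$, dualize, and then conclude by a finite-chain argument---is sound, and the two left-modular computations you carry out ($u_{i-1}\wedge x\leq x'$ and $x'\vee(u_i\wedge x)=x$) are correct. However, the two remaining substeps both have gaps, and interestingly they sit in the opposite places from where you locate the difficulty.

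For the join identity, the step ``these two equations immediately give $j_i\vee x'=x$'' is not a valid inference: from $x=x'\vee\big((j_i\vee u_{i-1})\wedge x\big)$ you cannot strip off $u_{i-1}$ without already knowing $j_i\leq x$ (which would let you rewrite $(j_i\vee u_{i-1})\wedge x$ via left modularity of $u_{i-1}$ with $a=j_i$, $b=x$), so the join claim in fact depends on the inequality you postpone. For the inequality $\jj_{x',x}\leq x$, the appeal to \cref{prop:1and5} is underpowered: that proposition only transports the \emph{index} $\gamma$ between $L$ and $[\hat 0,x]$, not the join-irreducible $j_\gamma$ itself, so you would still owe a separate argument identifying the label of $x'\lessdot x$ in $L$ with the corresponding label in the interval. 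Both steps can in fact be closed self-containedly, in the reverse order, with no recourse to \cref{prop:1and5}. Since $u_{i-1}\lessdot u_i$ is a cover, \cref{lem:max_are_meet-irreducible} plus the uniqueness of the join-irreducible in $\{z:z\vee u_{i-1}=u_i\}$ force $j_i$ to be the \emph{minimum} of that set. Your two identities give $u_i\wedge x\not\leq x'$ while $u_{i-1}\wedge x\leq x'$, hence $u_i\wedge x\not\leq u_{i-1}$, hence $(u_i\wedge x)\vee u_{i-1}$ strictly exceeds $u_{i-1}$ and so equals $u_i$; therefore $j_i\leq u_i\wedge x\leq x$. With $j_i\leq x$ now known, apply left modularity of $u_{i-1}$ with $a=x'\vee j_i$ and $b=x$: the right side $(x'\vee j_i)\vee(u_{i-1}\wedge x)$ collapses to $x'\vee j_i$ because $u_{i-1}\wedge x\leq x'$, while the left side $\big((x'\vee j_i)\vee u_{i-1}\big)\wedge x=(x'\vee u_i)\wedge x=x$, giving $x'\vee j_i=x$. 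This repairs both gaps; the dual argument handles the meet equality.
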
  

The bijection $\kappa_L$ allows us to define a directed graph $G_L$ called the \dfn{Galois graph} of $L$. The vertex set of $G_L$ is $\JJ_L$. For $j,j'\in\JJ_L$, we have an arrow $j\to j'$ in $G_L$ if and only if $j\neq j'$ and $j\not\leq\kappa_L(j')$. (See \cref{fig:trim}.) The Galois graph of $L$ is acyclic \cite{TW19,DW23}. 

\begin{lemma}[{\cite[Corollary~5.6]{TW19}}]\label{lem:independent_sets} 
Let $L$ be a trim lattice. Each of the maps $\DD_L$ and $\UU_L$ is a bijection from $L$ to the collection of independent sets of $G_L$. 
\end{lemma}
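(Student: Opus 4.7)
The plan is to prove the statement for $\DD_L$; the claim for $\UU_L$ then follows by applying the result to the dual lattice $L^*$ (which is again trim), noting that downward labels in $L^*$ correspond to upward labels in $L$ and that $G_{L^*}$ is $G_L$ with all arrows reversed, so the two graphs have the same independent sets. The argument for $\DD_L$ then breaks into three parts: (a) $\DD_L(x)$ is an independent set of $G_L$ for every $x \in L$, (b) $\DD_L$ is injective, and (c) $\DD_L$ is surjective onto the independent sets.

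For part (a), I would fix $x \in L$ and distinct labels $j, j' \in \DD_L(x)$, arising from cover relations $a \lessdot x$ and $a' \lessdot x$ with $\jj_{a,x} = j$ and $\jj_{a',x} = j'$. The goal is to verify $j \leq \kappa_L(j')$ (and symmetrically $j' \leq \kappa_L(j)$). The natural strategy is to pass to the trim interval $L' = [a \wedge a', x]$ provided by \cref{prop:1and5}: inside $L'$, the elements $a$ and $a'$ are the two coatoms below the top $x$, and the relevant $\kappa$-pairings take place entirely in $L'$. The transfer formula in \cref{prop:1and5}, together with the canonicality of the $\kappa$-bijection (it is independent of the chosen maximum-length chain), then lifts the interval-level relation back to the global inequality $j \leq \kappa_L(j')$.

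Part (b) is immediate from the identity $x = \bigvee \DD_L(x)$ in \cref{lem:join_and_meet}. For part (c), let $S$ be an independent set of $G_L$ and set $x := \bigvee S$; I claim $\DD_L(x) = S$. For the inclusion $S \subseteq \DD_L(x)$: for each $j \in S$, put $a_j = \bigvee(S \setminus \{j\})$. The independence of $S$ prevents $j \leq a_j$ (otherwise one can extract a violating arrow in $G_L$ using the structural properties of joins in a trim lattice), so $a_j < x$. A direct analysis using the $\gamma$-function from \eqref{eq:gamma}, combined with the identity $\jj_{j_*, j} = j$, then shows that $a_j \lessdot x$ and that this edge carries the label $j$. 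For the reverse inclusion, one exploits that $\DD_L(x)$ is itself an independent set with $\bigvee \DD_L(x) = x$, together with a uniqueness statement for independent-set join representations; this uniqueness can be established by induction along the acyclic order of $G_L$, which is where acyclicity is crucial.

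The main obstacle is the cover-relation verification and the accompanying uniqueness argument inside part (c). Showing that $a_j \lessdot x$ with edge label exactly $j$ requires careful bookkeeping of $\gamma_\C$-values along a fixed maximum-length chain $\C$, and establishing that two independent sets $S, S' \subseteq \JJ_L$ with $\bigvee S = \bigvee S'$ must coincide leans heavily on the trim property and on the acyclicity of $G_L$. Once these two ingredients are in place, the remainder of the proof is comparatively routine bookkeeping.
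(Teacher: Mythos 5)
The paper does not actually prove this lemma; it is cited directly from \cite[Corollary~5.6]{TW19}, so there is no in-paper argument to compare against. Judging your outline on its own terms: parts~(a) and~(b) are plausible (part~(b) is genuinely immediate from \cref{lem:join_and_meet}), but the surjectivity step in part~(c) is wrong. You claim that for an independent set $S$ with $x=\bigvee S$, each $a_j:=\bigvee(S\setminus\{j\})$ is covered by $x$ with edge label $j$. This fails already in the pentagon $N_5$, i.e.\ the trim lattice with covers $\hat 0\lessdot a\lessdot c\lessdot\hat 1$ and $\hat 0\lessdot b\lessdot\hat 1$. Here $\JJ_L=\{a,b,c\}$, and working along the spine $\hat 0\lessdot a\lessdot c\lessdot\hat 1$ one finds $\kappa_L(a)=b$, $\kappa_L(c)=a$, $\kappa_L(b)=c$, so the Galois graph has precisely the arrows $b\to c$ and $c\to a$. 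Then $S=\{a,b\}$ is independent with $\bigvee S=\hat 1$, yet for $j=b$ you get $a_j=\bigvee\{a\}=a$, which is not covered by $\hat 1$ (the only coatoms are $b$ and $c$). The cover below $\hat 1$ carrying the label $b$ is $c$, not $\bigvee(S\setminus\{b\})$.

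The structural reason this breaks is that away from the semidistributive case, $\DD_L(x)$ in a trim lattice is not a canonical join representation in the Reading--Barnard sense: deleting one generator from $S$ and joining the rest does not, in general, land on a coatom of $x$. Relatedly, the uniqueness statement you invoke for the reverse inclusion (two independent sets with the same join must coincide) cannot simply be quoted---it is essentially equivalent to the bijectivity you are trying to establish, so leaning on it is circular. To fix surjectivity you would need a genuinely different mechanism, for instance a counting argument comparing $|L|$ with the number of independent sets of an acyclic directed graph on $|\JJ_L|$ vertices, or an induction along the spine decomposition $L=[\,\hat 0,m_1]\sqcup[j_1,\hat 1]$ used elsewhere in the paper, rather than the literal ``delete-a-generator'' construction.
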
 

Let $\C$ be a maximum-length chain of a trim lattice $L$. For $u\in L$, define $\w_{\C}(u)$ to be the word over the alphabet $[k+1]$ obtained by listing the elements of \[\{\gamma_{\C}(u\lessdot y):y\in\Cov_L^\uparrow(u)\}\cup\{k+1\}\] in increasing order. Because the map $\UU_L$ is injective on $L$, the map $\w_{\C}$ is also injective on $L$. Let $\leq_\lex$ denote the lexicographic total order on the set of words over the alphabet $\mathbb Z$. Let $\sigma_{\C}\colon L\to[n]$ be the unique bijection such that 
\[\w_{\C}(\sigma_\C^{-1}(1))<_\lex \w_{\C}(\sigma_\C^{-1}(2))<_\lex\cdots<_\lex\w_{\C}(\sigma_\C^{-1}(n)).\] 
(See \cref{fig:trim}.) 

\begin{lemma}\label{lem:CC'} 
Let $L$ be a trim lattice with maximum element $\hat 1$, and let $v\in L$. Let $L'=[v,\hat 1]$, and let $n'=|L'|$. Let $\C$ be a maximum-length chain of $L$, and let $\C'=\{v\vee u:u\in\C\}$. Then $\sigma_{\C'}=\psi\circ\sigma_\C$, where $\psi$ is the unique order-preserving bijection from $\sigma_\C(L')$ to $[n']$. 
\end{lemma}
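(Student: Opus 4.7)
\medskip

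The plan is to reduce this identity of linear extensions to an identity of word orderings, and then use \cref{prop:1and5} to transport the defining words under $\varphi$. Concretely, I will show that for every $u \in L'$, the word $\w_{\C'}(u)$ is obtained from $\w_{\C}(u)$ by an order-preserving letter substitution; the monotonicity of the substitution then forces $\w_{\C}$ and $\w_{\C'}$ to induce the same lexicographic order on $L'$, which is exactly the content of the lemma.

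The first step is to verify that $\C'$ actually is a maximum-length chain of $L'$, which is immediate from \cref{prop:1and5}: since $L' = [v,\hat 1]$, the formula $(v \vee u) \wedge \hat 1 = v \vee u$ gives $\C'$, and the lemma says this is a maximum-length chain of $L'$ of length $k'$, with $|\Gamma_{\C}(L')| = k'$. Let $\varphi\colon \Gamma_{\C}(L') \to [k']$ be the order-preserving bijection from \cref{prop:1and5}, so that $\gamma_{\C'}(x \lessdot y) = \varphi(\gamma_{\C}(x \lessdot y))$ for every cover relation $x \lessdot y$ in $L'$. Extend $\varphi$ to an order-preserving injection $\widetilde\varphi\colon \Gamma_{\C}(L') \cup \{k+1\} \to [k'+1]$ by setting $\widetilde\varphi(k+1) = k'+1$.

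Next I will observe that, because $L'$ is an upper filter in $L$ (i.e., $L' = \nabla_L(v)$), the upper covers in $L$ and in $L'$ agree on $L'$: for $u \in L'$ and $y \in L$ with $u \lessdot y$ in $L$, any element $z$ with $u \leq z \leq y$ satisfies $z \geq v$ and hence lies in $L'$, so $u \lessdot y$ in $L'$; conversely every cover in $L'$ is a cover in $L$. Consequently, for each $u \in L'$,
\[
\{\gamma_{\C'}(u \lessdot y) : y \in \Cov_{L'}^{\uparrow}(u)\} = \varphi\bigl(\{\gamma_{\C}(u \lessdot y) : y \in \Cov_L^{\uparrow}(u)\}\bigr),
\]
so applying $\widetilde\varphi$ entry-wise to $\w_{\C}(u)$ produces $\w_{\C'}(u)$ (the $k+1$ padding is sent to the $k'+1$ padding, and all other letters lie in $\Gamma_{\C}(L')$ by definition of $\gamma_{\C}$).

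Finally, because $\widetilde\varphi$ is strictly order-preserving, the induced map on words preserves lexicographic order. Hence, for any $u_1, u_2 \in L'$, we have $\w_{\C}(u_1) <_{\lex} \w_{\C}(u_2)$ if and only if $\w_{\C'}(u_1) <_{\lex} \w_{\C'}(u_2)$. This means $\sigma_{\C'}$ is the unique bijection $L' \to [n']$ whose ordering on $L'$ agrees with the one induced by $\sigma_{\C}|_{L'}$, which is precisely $\psi \circ \sigma_{\C}|_{L'}$. The main (and essentially only) technical point is the identification of $\w_{\C'}(u)$ as $\widetilde\varphi$ applied to $\w_{\C}(u)$; once that is in hand, the rest is a formal consequence of monotonicity of $\widetilde\varphi$ and the uniqueness in the definition of $\sigma_{\C'}$.
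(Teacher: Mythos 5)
Your proof is correct and takes essentially the same approach as the paper's: both invoke \cref{prop:1and5} to get the order-preserving bijection $\varphi$, use the fact that $\Cov_{L'}^\uparrow(u)=\Cov_L^\uparrow(u)$ for $u\in L'$, and conclude that $\w_{\C'}(u)$ is obtained from $\w_\C(u)$ by the order-preserving substitution that applies $\varphi$ to the finite letters and sends $k+1$ to $k'+1$. You spell out the final monotonicity step a bit more explicitly than the paper does, but the argument is the same.
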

\begin{proof}
Let $k=|\JJ_L|$ and $k'=|\JJ_{L'}|$. According to \cref{prop:1and5}, we have $|\Gamma_{\C}(L')|=k'$, so there is a unique order-preserving bijection $\varphi\colon\Gamma_\C(L')\to[k']$. For each $u\in L'$, we have $\Cov_{L'}^\uparrow(u)=\Cov_{L}^\uparrow(u)$, so it follows from \cref{prop:1and5} that 
\[\{\gamma_{\C'}(u\lessdot y):y\in\Cov_{L'}^\uparrow(u)\}=\{\varphi(\gamma_\C(u\lessdot y)):y\in\Cov_L^\uparrow(u)\}.\] Therefore, the word $\w_{\C'}(u)$ is obtained from $\w_\C(u)$ by replacing the letter $k+1$ with $k'+1$ and applying $\varphi$ to all of the other letters. This implies the desired result. 
\end{proof}

\begin{lemma}\label{lem:vertebral_are_linear} 
Let $\C$ be a maximum-length chain of a trim lattice $L$. Then $\sigma_{\C}$ is a linear extension of $L$. 
\end{lemma}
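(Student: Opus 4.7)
The plan is to proceed by induction on $|L|$, showing that for every cover $u \lessdot u'$ in $L$ one has $\w_\C(u) <_\lex \w_\C(u')$; this is equivalent to $\sigma_\C(u) < \sigma_\C(u')$. The base case $|L| \leq 2$ is a direct computation from the definition of $\w_\C$.

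For the inductive step, when $u \neq \hat 0$, I would apply the inductive hypothesis to the strictly smaller trim lattice $L' = [u, \hat 1]$ with the maximum-length chain $\C' = \{u \vee v : v \in \C\}$ provided by \cref{prop:1and5}. Since $u \lessdot u'$ in $L'$, the inductive hypothesis yields $\sigma_{\C'}(u) < \sigma_{\C'}(u')$, and \cref{lem:CC'} then transfers this inequality to $\sigma_\C(u) < \sigma_\C(u')$ via the order-preserving bijection $\psi$.

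The case $u = \hat 0$ cannot be handled by upper-interval induction, so I would argue directly that $\w_\C(\hat 0) <_\lex \w_\C(w)$ for every $w \neq \hat 0$. For each $w \in L$, write $I(w) = \{\gamma_\C(w \lessdot y) : y \in \Cov_L^\uparrow(w)\}$, so $\w_\C(w)$ lists $I(w) \cup \{k+1\}$ in increasing order; a routine check gives the lex-criterion that, for distinct subsets $S, S' \subseteq [k]$, one has $\w(S) <_\lex \w(S')$ iff $\min(S \triangle S') \in S$. Using the trim-lattice identity $y = x \vee \jj_{x,y}$, one gets $I(\hat 0) = \{i \in [k] : j_i \text{ is an atom of } L\}$. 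I would then argue by contradiction: suppose the minimum $i^*$ of $I(w) \triangle I(\hat 0)$ lies in $I(w) \setminus I(\hat 0)$. Then $j_{i^*}$ is not an atom, so $(j_{i^*})_* > \hat 0$. The cover $w \lessdot y$ with $\gamma_\C(w \lessdot y) = i^*$ is labeled by $j_{i^*}$, and trim-lattice labeling properties give $(j_{i^*})_* \leq w$ together with $(j_{i^*})_* \leq u_{i^*-1}$. Choosing any atom $a$ with $a \leq (j_{i^*})_*$, we get $a \leq w$ and $a \leq u_{i^*-1}$, so $i_a := \gamma_\C(\hat 0 \lessdot a)$ satisfies $i_a < i^*$ and $i_a \in I(\hat 0)$. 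By minimality of $i^*$, $i_a \in I(w)$, which produces a cover $w \lessdot y'$ labeled by $a$; but then $y' = w \vee a = w$ (since $a \leq w$), a contradiction.

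The main obstacle will be cleanly justifying the trim-lattice labeling properties invoked in the $u = \hat 0$ case: that $y = x \vee \jj_{x,y}$ for every cover $x \lessdot y$, that $(\jj_{x,y})_* \leq x$ for every such cover, and that $(j_i)_* \leq u_{i-1}$ for the chain labels. All three should follow from the uniqueness clauses in the definitions of $j_i$ and $m_i$ together with results in \cite{T06,TW19,DW23}, since in each case a failure would produce a second join-irreducible realizing the same cover, contradicting uniqueness; articulating these facts precisely in the paper's notation will nonetheless require some care.
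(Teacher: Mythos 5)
Your inductive framework and your Case $u \neq \hat 0$ argument (recurse into the upper interval $[u,\hat 1]$, transfer via \cref{lem:CC'}) coincide exactly with the paper's. Where you diverge is the $u = \hat 0$ case: the paper instead sets $j_1$ to be the atom in $\C$, $m_1 = \kappa_L(j_1)$, and uses the decomposition $L = [\hat 0, m_1] \sqcup [j_1, \hat 1]$ from \cite[Lemma~3.12]{TW19}; it recurses into the strictly smaller lower interval $[\hat 0, m_1]$ using the characterization that $w \in [\hat 0, m_1]$ if and only if the letter $1$ appears in $\w_\C(w)$, and handles $w \in [j_1,\hat 1]$ by noting that the letter $1$ is absent from $\w_\C(w)$. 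Your route is a direct combinatorial analysis of the label sets $I(\hat 0), I(w)$, which is a genuinely different argument and is logically sound, but it is not self-contained within the paper's toolkit: it appeals to the trim-lattice labeling facts that $y = x \vee \jj_{x,y}$ and $(\jj_{x,y})_* \leq x$ for every cover $x \lessdot y$. These are true (they follow from the description in \cite{T06,TW19} of the set of covers labeled by a fixed $j \in \JJ_L$ as $\{x \lessdot x \vee j : j_* \leq x \leq \kappa_L(j)\}$), but the paper neither states nor needs them and, as you yourself flag, you would have to supply precise references or a proof. In short: same structure, same Case 1, a different and valid but less self-contained Case 2. The paper's Case 2 is arguably cleaner precisely because it avoids the two labeling facts by trading them for a second induction into the lower interval.
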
 

\begin{proof}
We proceed by induction on $|L|$. Let $\hat 0$ and $\hat 1$ be the minimum and maximum elements of $L$, respectively. Choose $z,z'\in L$ with $z\leq z'$; we wish to show that $\w_\C(z)\leq_\lex \w_\C(z')$ (equivalently, $\sigma_\C(z)\leq\sigma_\C(z')$). We consider two cases. 

\medskip 

\noindent {\bf Case 1.} Suppose $z\neq\hat 0$. Let $L'=[z,\hat 1]$, and let $\C'=\{z\vee u:u\in\C\}$. We know by \cref{prop:1and5} that $\C'$ is a maximum-length chain of $L'$. Since $|L'|<|L|$, we know by induction that ${\sigma_{\C'}(z)\leq \sigma_{\C'}(z')}$. It follows from \cref{lem:CC'} that $\sigma_\C(z)\leq \sigma_\C(z')$, as desired. 

\medskip 

\noindent {\bf Case 2.} Suppose $z=\hat 0$. Let $j_1$ be the atom in $\C$, and let $m_1=\kappa_L(j_1)$. Let $L'=[\hat 0,m_1]$ and $L''=[j_1,\hat 1]$. Let $k=|\JJ_L|$ and $k'=|\JJ_{L'}|$. According to \cite[Lemma~3.12]{TW19}, we have the decomposition $L=L'\sqcup L''$. According to \cite[Lemma~3.14]{TW19}, an element $w\in L$ is in $L'$ if and only if $j_1\in\UU_L(w)$. In other words, we have $w\in L'$ if and only if the letter $1$ appears in $\w_\C(w)$. \cref{prop:1and5} tells us that $k'=|\Gamma_\C(L')|$. Let $\varphi$ be the order-preserving bijection from $\Gamma_\C(L')$ to $[k']$. 
For each $w\in L'$, it follows from \cite[Lemma~3.15]{TW19} that the unique element of $\Cov_L^\uparrow(w)\cap[j_1,\hat 1]$ is $w\vee j_1$, so $\Cov_{L'}^\uparrow(w)=\Cov_L^\uparrow(w)\setminus\{w\vee j_1\}$. Therefore, it follows from \cref{prop:1and5} that $\w_{\C'}(w)$ is obtained from $\w_\C(w)$ by deleting the letter $1$, changing the letter $k+1$ to $k'+1$, and applying $\varphi$ to all of the other letters. For each $w\in L'$, we know by induction (since $|L'|<|L|$) that $\w_{\C'}(\hat 0)\leq_\lex\w_{\C'}(w)$, so we must also have $\w_\C(\hat 0)\leq_\lex\w_\C(w)$. 

If $z'\in L'$, then we have just seen that $\w_\C(\hat 0)\leq_\lex\w_\C(z')$, as desired. On the other hand, if $z'\in L''$, then $\w_\C(\hat 0)<_\lex\w_\C(z')$ because $\w_\C(\hat 0)$ contains the letter $1$ while $\w_\C(z')$ does not.  
\end{proof} 

\begin{definition}\label{def:vertebral} 
We say a linear extension of a trim lattice $L$ is \dfn{vertebral} if it is of the form $\sigma_\C$ for some maximum-length chain $\C$ of $L$. 
\end{definition}

Given a trim lattice $L$, we follow \cite{TW19} and define \dfn{rowmotion} to be the bijection $\row_L\colon L\to L$ characterized by the equality 
\[\DD_L=\UU_L\circ\row_L.\] 
Note that the restriction of $\row_L$ to $\JJ_L$ is $\kappa_L$. Defant and Williams proved \cite[Theorem~9.3]{DW23} that \begin{equation}\label{eq:trim_pop=row}
\row_L(x)\in\max\Odown_L(x)
\end{equation} for every $x\in L$. 
(See \cref{fig:trim}.)  

\begin{figure}[htbp]
  \begin{center}
  \includegraphics[height=6.370cm]{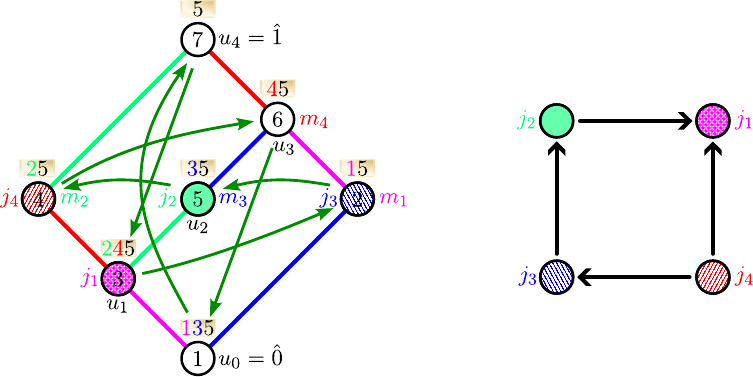}
  \end{center}
\caption{On the left is a trim lattice with the elements of a maximum-length chain $\C=\{u_0,u_1,u_2,u_3,u_4\}$ labeled. Each element $x$ is represented by a circle filled with $\sigma_\C(x)$. We have also labeled the join-irreducible elements $j_1,j_2,j_3,j_4$ and the meet-irreducible elements $m_1,m_2,m_3,m_4$. Each join-irreducible element has its own color, which is also used to color the edges that it labels. Above each element $x$ is a rectangle containing the word $\w_\C(x)$. A green arrow is drawn from each element to its image under rowmotion. On the right is the Galois graph of this lattice. }\label{fig:trim}
\end{figure}

\begin{example} 
\cref{def:vertebral} is illustrated in \cref{fig:trim}. The lattice $L$ on the left is trim but not semidistributive. 
We have chosen a maximum-length chain $\C=\{\hat{0}\lessdot u_1\lessdot u_2\lessdot u_3\lessdot u_4=\hat 1\}$. Each element $x$ is represented by a circle filled with $\sigma_\C(x)$, and above that circle is a golden rectangle containing the word $\w_\C(x)$. Note that 
\[{\color{Pink}1}{\color{blue}3}5<_\lex {\color{Pink}1}5<_\lex {\color{BlueGreen}2}{\color{red}4}5<_\lex {\color{BlueGreen}2}5<_\lex {\color{blue}3}5<_\lex {\color{red}4}5<_\lex 5.\] 
Rowmotion on $L$ is indicated by green arrows. 
\end{example}

We can now prove that echelonmotion on a trim lattice with respect to a vertebral linear extension coincides with rowmotion. 

\begin{proof}[Proof of~\cref{thm:trim}] 
Let $\sigma$ be a vertebral linear extension of a trim lattice $L$. Thus, we have $\sigma=\sigma_\C$ for some maximum-length chain $\C$ of $L$. We know by \cref{lem:Mobius_nonzero} that $\mu_L(\popdown_L(x),x)\neq 0$ for every $x\in L$. We will prove that \begin{equation}\label{eq:trim_goal_1}
\textstyle {\max_\sigma}(\Odown_L(x))=\row_L(x)
\end{equation} 
for every $x\in L$. Since rowmotion is a bijection, it will then follow from \cref{cor:alpha} that $\Ech_\sigma=\row_L$. Our strategy is to proceed by induction on $|L|$. 

Let $x\in L$. If $x=\hat 0$, then $\row(x)=\hat 1$, so \eqref{eq:trim_goal_1} certainly holds. Thus, we may assume $x>\hat 0$ and proceed by induction on the lattice $L$. In other words, we may assume ${\max_\sigma(\Odown_L(\widetilde x))=\row_L(\widetilde x)}$ for all $\widetilde x\in L$ such that $\widetilde x<x$. 

Let $L'=[\popdown_L(x),\hat 1]$. The set $\C'=\{\popdown_L(x)\vee u:u\in\C\}$ is a maximum-length chain of $L'$ by \cref{prop:1and5}. We have $\popdown_{L'}(x)=\popdown_L(x)$, so $\Odown_{L'}(x)=\Odown_L (x)$. It follows from \cref{prop:1and5} that $\row_{L'}(x)=\row_L(x)$. If $|L'|<|L|$, then we know by induction that 
\[\textstyle {\max_{\sigma_{\C'}}}(\Odown_{L'}(x))=\row_{L'}(x),\] so \[\textstyle {\max_{\sigma_{\C'}}}(\Odown_{L}(x))=\row_{L}(x).\] Therefore, if $|L'|<|L|$, then \eqref{eq:trim_goal_1} follows from \cref{lem:CC'}. Hence, we may assume in what follows that $L=L'$. In other words, we will assume that $\popdown_L(x)=\hat 0$. 

Let $y\in\Odown_L(x)$; we will show that \begin{equation}\label{eq:wCy} 
\w_\C(y)\leq_\lex\w_\C(\row_L(x)).
\end{equation} As $y$ is arbitrary, this will prove \eqref{eq:trim_goal_1}.

Let $\C=\{\hat 0=u_0\lessdot u_1\lessdot u_2\lessdot\cdots\lessdot u_k=\hat 1\}$. For $i\in[k]$, let $j_i$ be the unique join-irreducible element of $L$ such that $j_i\vee u_{i-1}=u_i$. Let $r$ be the smallest integer such that $j_r$ belongs to the set $\DD_L(x)=\UU_L(\row_L(x))$. In other words, $r$ is the first letter in $\w_\C(\row_L(x))$. According to \cite[Proposition~9.5]{DW23}, we have $\DD_L(x) \subseteq \UU_L(\popdown_L(x))$. Since $\popdown_L(x)=\hat 0$, there exists an atom $a$ of $L$ such that $\jj_{\hat 0,a}=j_r$. But $a$ is join-irreducible, so $\jj_{\hat 0,a}=a$. Hence, $a=j_r$. Because $j_r\in\DD_L(x)$, we know by \cref{lem:join_and_meet} that $j_r\leq x$. Now, $y\wedge x=\popdown_L(x)=\hat 0$, so $j_r\not\leq y$. Since $j_r\leq u_r$, this implies that $y<u_r\vee y$, so there exists $z$ such that $y\lessdot z\leq u_r\vee y$. Let $r'=\gamma_{\C}(y\lessdot z)$ so that $\jj_{y,z}=j_{r'}$. By the definition of $\gamma_\C(y\lessdot z)$ (see \eqref{eq:gamma}), we know that $r'\leq r$. The letter $r'$ appears in $\w_\C(y)$. Therefore, if $r'<r$, then $\w_\C(y)<_\lex\w_\C(\row_L(x))$, as desired. Hence, we may assume in what follows that $r'=r$. 

We have $j_r\in\DD_L(x)=\UU_L(\row_L(x))$ and $j_r\in\UU_L(y)$. It follows from \cref{lem:independent_sets} that there exist $x',y'\in L$ such that \[\DD_L(x')=\DD_L(x)\setminus\{j_r\}\quad\text{and}\quad\UU_L(y')=\UU_L(y)\setminus\{j_r\}.\] Note that $\UU_L(\row_L(x'))=\UU_L(\row_L(x))\setminus\{j_r\}$. The words $\w_\C(\row_L(x'))$ and $\w_\C(y')$ are obtained from $\w_\C(\row_L(x))$ and $\w_C(y)$, respectively, by deleting the letter $r$. Hence, in order to prove \eqref{eq:wCy}, it suffices to show that \begin{equation}\label{eq:wCy'}
\w_\C(y')\leq_\lex\w_\C(\row_L(x')).
\end{equation} 
According to \cref{lem:join_and_meet}, we have 
\[x'=\bigvee\DD_L(x')\leq\bigvee\DD_L(x)=x.\] We must have $x'<x$ since $\DD_L(x)\neq\DD_L(x')$. Therefore, we know by induction that \[
\textstyle {\max_\sigma}(\Odown_L(x'))=\row_L(x').\] In other words, we have $\w_\C(z)\leq\w_\C(\row_L(x'))$ for every $z\in\Odown_L(x')$. Thus, to complete the proof of \eqref{eq:wCy'} (and, therefore, the proof of the theorem), we just need to show that $y'\in\Odown_L(x')$. 

For each $j\in\DD_L(x')$, there is no arrow from $j$ to $j_r$ in the Galois graph $G_L$ because $j$ and $j_r$ both belong to the independent set $\DD_L(x)$. By the definition of the Galois graph, this means that $j\leq\kappa_L(j_r)$. As this is true for all $j\in\DD_L(x')$, we can use \cref{lem:join_and_meet} to deduce that 
\[x'=\bigvee\DD_L(x')\leq\kappa_L(j_r).\] Hence, $x'=x'\wedge\kappa_L(j_r)$. Appealing to \cref{lem:join_and_meet} again and using the fact that $x'\leq x$, we find that 
\begin{align*}
x'\wedge y'&=x'\wedge\bigwedge\kappa_L(\UU_L(y')) \\ 
&=(x'\wedge\kappa_L(j_r))\wedge\bigwedge\left(\kappa_L(\UU_L(y))\setminus\{\kappa_L(j_r)\}\right) \\ 
&=x'\wedge\bigwedge\kappa_L(\UU_L(y)) \\ 
&=x'\wedge y \\
&\leq x\wedge y \\ 
&=\hat{0}, 
\end{align*}
where the last equality holds because $y\in\Odown_L(x)$ and $\popdown_L(x)=\hat{0}$. 
Similarly, 
\begin{align*}
x'\wedge \row_L(x')&=x'\wedge\bigwedge\kappa_L(\UU_L(\row_L(x'))) \\ 
&=(x'\wedge\kappa_L(j_r))\wedge\bigwedge\left(\kappa_L(\UU_L(\row_L(x)))\setminus\{\kappa_L(j_r)\}\right) \\ 
&=x'\wedge\bigwedge\kappa_L(\UU_L(\row_L(x))) \\ 
&=x'\wedge \row_L(x) \\
&\leq x\wedge \row_L(x) \\ 
&=\hat{0}. 
\end{align*} 
This shows that 
\[x'\wedge y'=x'\wedge\row_L(x')=\hat{0}.\] Since $\row_L(x')\in\Odown_L(x')$ by \eqref{eq:trim_pop=row}, we have $\popdown_L(x')=x'\wedge\row_L(x')=\hat{0}$. This shows that $x'\wedge y'=\hat{0}=\popdown_L(x')$, so $y'\in\Odown_L(x')$, as desired. 
\end{proof}

\section{Eulerian Posets}\label{sec:Eulerian} 

Recall that a graded poset $R$ is \emph{Eulerian} if $\mu_R(x,y)=(-1)^{\mathrm{rk}(y)-\mathrm{rk}(x)}$ for every interval $[x,y]$ in $R$. We now prove that echelonmotion on an Eulerian poset (with respect to any linear extension) is an involution. 

\begin{proof}[Proof of~\cref{thm:Eulerian}] 
Let $\sigma$ be a linear extension of an $n$-element Eulerian poset $R$. Let $W=W^{R,\sigma}$ be the associated Cartan matrix. It follows from the definition of the M\"obius function that the entries of the inverse matrix $W^{-1}$ are given by 
\[W^{-1}_{i,j}=\begin{cases}
    \mu_R(\sigma^{-1}(j),\sigma^{-1}(i)) & \text{if }\sigma^{-1}(i)\geq \sigma^{-1}(j) \\
    0 & \text{if }\sigma^{-1}(i)\not\geq \sigma^{-1}(j). 
\end{cases}\] Because $R$ is Eulerian, 
\[W^{-1}_{i,j}=\begin{cases}
    (-1)^{\mathrm{rk}(\sigma^{-1}(i))-\mathrm{rk}(\sigma^{-1}(j))} & \text{if }\sigma^{-1}(i)\geq \sigma^{-1}(j) \\
    0 & \text{if }\sigma^{-1}(i)\not\geq \sigma^{-1}(j),  
\end{cases}\]
where $\mathrm{rk}$ is the rank function of $R$. 
Let $D$ be the $n\times n$ diagonal matrix whose $i$-th diagonal entry is $(-1)^{\mathrm{rk}(\sigma^{-1}(i))}$. Then 
$W^{-1}=DWD^{-1}$. Let $P=P^{R,\sigma}$ be the permutation matrix appearing in the Bruhat decomposition of $W$. This means that there are upper-triangular matrices $U_1$ and $U_2$ such that $W=U_1PU_2$. But then $W^{-1}=DU_1PU_2D$. The matrices $DU_1$ and $U_2D$ are upper-triangular, so the permutation matrix in the Bruhat decomposition of $W^{-1}$ is $P$. However, we can also write $W^{-1}=U_2^{-1}P^{-1}U_1^{-1}$ to see that the permutation matrix in the Bruhat decomposition of $W^{-1}$ is $P^{-1}$. It follows that $P=P^{-1}$, so $\Ech_\sigma$ is an involution. 
\end{proof} 

\section{Concluding Remarks}\label{sec:conclusion}  

We believe that echelonmotion deserves much more attention, so we conclude with some problems and questions aimed at guiding future research.

\subsection{Echelon-Independence and Auslander-Regular Posets} 
Recall that \cref{thm:bounded,thm:MacNeille,semidist-thm} make progress toward the following problem. 

\begin{problem}
Classify echelon-independent posets. 
\end{problem} 

Define an equivalence relation $\equiv_{\Ech}$ on the set of linear extensions of a poset $R$ by declaring that $\sigma\equiv_{\Ech}\sigma'$ if and only if $\Ech_\sigma=\Ech_{\sigma'}$. Thus, $R$ is echelon-independent if and only if $\equiv_{\Ech}$ has exactly one equivalence class. Even when $R$ is not echelon-independent, it could be interesting to study properties of this equivalence relation (such as the number of equivalence classes). 

Let us briefly discuss an interesting connection between the rowmotion operator and homological algebra of finite-dimensional algebras.
Let $K$ be a field, and let $A$ be a finite-dimensional $K$-algebra of finite global dimension.
Then $A$ is called \dfn{Auslander-regular} if the minimal injective coresolution
$$0 \rightarrow A \rightarrow I^0 \rightarrow I^1 \rightarrow I^2 \rightarrow \cdots \rightarrow I^n \rightarrow 0$$
has the property that the projective dimension of $I^i$ is at most $i$ for all $i \geq 0$. 
This is the non-commutative generalization of the classical commutative regular rings due to Auslander; see the survey \cite{Cl}.
Iyama and Marczinzik \cite{IM} proved that a finite lattice $L$ is distributive if and only if the incidence algebra of $L$ is Auslander-regular.
Every Auslander-regular algebra $A$ comes with a natural permutation $\pi$, called the \dfn{Auslander--Reiten permutation}, which is defined by the condition that the indecomposable projective module $P_{\pi(i)}$ is the last term in the minimal projective resolution of the indecomposable injective module $I_i$.
For other equivalent descriptions of the Auslander--Reiten permutation, see \cite{KMT25}.
It was shown in \cite{IM} that the Auslander--Reiten permutation on the incidence algebra of a distributive lattice coincides with the inverse of the rowmotion operator on the elements of this distributive lattice.
We call a finite poset $R$ \dfn{Auslander-regular} if the incidence algebra $KR$ is Auslander-regular when $K$ has characteristic $0$. 
The classification of Auslander-regular posets is an open problem with some partial progress in the forthcoming article \cite{IKKM}.
Examples of Auslander-regular posets that are not lattices include the strong Bruhat order on the symmetric groups $S_3$ and $S_4$. Many more examples related to number theory will be given in \cite{IKKM}.
We have the following conjecture:

\begin{conjecture} 
Let $R$ be a connected Auslander-regular poset. Then $R$ is echelon-independent, and echelonmotion on $R$ is the inverse of the Auslander--Reiten permutation of $R$. 
\end{conjecture}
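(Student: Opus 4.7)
The plan is to prove the stronger statement $\Ech_\sigma = \pi^{-1}$ for every linear extension $\sigma$ of $R$, where $\pi$ is the Auslander--Reiten permutation of $R$. Since $\pi$ is an intrinsic invariant of the incidence algebra $KR$, defined without reference to any ordering of its simple modules, this identification automatically yields echelon-independence. The target identification, unpacked, reads: $\Ech_\sigma(x)=y$ if and only if $P_x$ is the last term of the minimal projective resolution of the indecomposable injective $I_y$ in $\mathrm{mod}\,KR$.

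The strategy is to use \cref{prop:labeling} with coefficient functions built directly from the minimal projective resolution of $I_y$ and (dually) from the minimal injective coresolution of $P_x$. Specifically, for a putative pair $(x,y)$ with $\pi(y)=x$, the Auslander-regularity of $R$ supplies a minimal projective resolution \[0 \to P_x \to Q^{n_y-1} \to \cdots \to Q^0 \to I_y \to 0\] of finite length $n_y$. In the Grothendieck group this gives $[I_y]=\sum_{k}(-1)^k[Q^k]$, which translates into an explicit integer vector supported on $\Delta_R(y) \cup\{x\}$ (since projectives appearing in any resolution of $I_y$ must have their tops below $y$, and $x$ is singled out as the top of the last term). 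I would take $\rr\colon \Pre_\sigma(x)\to \CC$ to be the coordinates of this vector restricted to $\Pre_\sigma(x)$. Dually, I would take $\bb\colon \Suc_\sigma(y)\to\CC$ from the minimal injective coresolution of $P_x$, whose last term is $I_y$ by the bijective correspondence between the last terms of projective resolutions of injectives and the last terms of injective coresolutions of projectives for Auslander-regular algebras.

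The main body of work is then to verify conditions \emph{(i)}--\emph{(vi)} of \cref{prop:labeling} for these choices. Conditions \emph{(i)}, \emph{(ii)}, \emph{(iii)}, \emph{(iv)} express nonvanishing of the last term of the resolution and of the coefficient of $P_x$ (respectively $I_y$) in $[I_y]$ (respectively $[P_x]$), both of which follow from the uniqueness and indecomposability of the last term in the Auslander-regular setting. Conditions \emph{(v)} and \emph{(vi)} encode the fact that for any $u \in R$ with $u>y$ in a suitable sense (or $u$ being $\sigma$-later than $y$), the restricted alternating sum $\sum_{w \leq u}\rr(w)$ vanishes; this would follow from the exactness of the projective resolution paired against $\mathrm{Hom}(-,I_u)$ or, equivalently, from the vanishing of the appropriate $\mathrm{Ext}$ groups forced by Auslander-regularity.

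The main obstacle is that the minimal projective resolution of $I_y$ may, in principle, involve projectives $P_z$ with $z$ not comparable to $y$, so the vanishing conditions \emph{(v)} and \emph{(vi)} need not hold for an \emph{arbitrary} linear extension $\sigma$ without further input. The key to overcoming this is to use the defining property of Auslander-regularity---that the $i$-th term of the minimal injective coresolution of $A$ has projective dimension at most $i$---to control, by an induction on the rank of the submatrices appearing in \eqref{eq:ranks}, the supports of the resolutions and of their contributions in $K_0$. This is essentially a homological reformulation of the rank computations underlying the Bruhat decomposition, and the technical heart of the proof should be this reformulation, likely mirroring the calculations of \cite{KMT25} in the distributive case but with $J(Q)$ replaced by an arbitrary Auslander-regular poset. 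As a sanity check, the resulting argument should recover Iyama--Marczinzik's identification $\pi^{-1}=\row$ for distributive lattices and should be compatible with \cref{thm:Eulerian}, since Auslander-regular Eulerian posets should then satisfy $\pi=\pi^{-1}$.
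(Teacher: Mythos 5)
This statement is a \emph{conjecture} in the paper, not a theorem: the authors explicitly write ``We have verified this conjecture for posets with at most 10 elements,'' and defer further discussion to the forthcoming \cite{IKKM}. There is no proof in the paper for you to match, so your proposal cannot be judged against one.

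As for your proposal on its own terms: it is an honest and plausible \emph{plan of attack}, not a proof, and you correctly flag the crux yourself. The reduction ``prove $\Ech_\sigma = \pi^{-1}$ for every $\sigma$, then echelon-independence is automatic because $\pi$ is intrinsic'' is sound. The choice of $\rr$ and $\bb$ from the alternating sum of the minimal projective resolution of $I_y$ (respectively, the injective coresolution of $P_x$) is a natural guess, and it does recover the M\"obius-function construction used in \cref{cor:alpha} when $R$ is a distributive lattice. But the real content — verifying conditions \emph{(v)} and \emph{(vi)} of \cref{prop:labeling} for an \emph{arbitrary} linear extension $\sigma$ — is not carried out. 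The obstruction you name (the support of the minimal projective resolution of $I_y$ need not lie in $\Delta_R(y)$) is precisely the point, and ``use the $\mathrm{pd}\,I^i \le i$ condition to control the supports by induction on the rank of submatrices'' is an assertion of hope, not an argument. You would also need to check that the alternating Grothendieck-group coefficient of $P_x$ in $[I_y]$ is nonzero (condition \emph{(iii)}), which is not ``uniqueness of the last term'': the last term is unique as a \emph{module}, but its contribution could cancel against other terms in the alternating sum absent a separate argument (for instance, a strict-decrease-of-support lemma along the resolution). Finally, your translation of conditions \emph{(v)} and \emph{(vi)} into ``vanishing of the appropriate $\mathrm{Ext}$ groups'' is never made precise; the paper's Cartan matrix entries are $\dim\mathrm{Hom}(P_i, P_j)$, and you would need to spell out exactly which $\mathrm{Ext}$ vanishings correspond to the rank equalities in \eqref{eq:ranks}, in the order dictated by $\sigma$. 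None of this is implausible, but it is exactly the part that remains open — consistent with the fact that the authors state the result only as a conjecture.
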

We have verified this conjecture for posets with at most 10 elements. Note that a connected Auslander-regular poset is bounded and has a distributive MacNeille completion; see \cite{IKKM}.
As a final remark, we note that we can associate to every finite-dimensional algebra of finite global dimension an echelonmotion using the fact that the Cartan matrix is invertible in that case. Echelonmotion on a poset with respect to a linear extension $\sigma$ is then obtained as a special case by considering the Cartan matrix of the incidence algebra of the poset with respect to $\sigma$. 
It turns out that this definition is also interesting for other finite-dimensional algebras associated to combinatorial objects such as Dyck paths, which correspond bijectively to linear Nakayama algebras; see \cite{KKM}.

\subsection{Canonical Bruhat Factorizations} 

Let $\sigma$ be a linear extension of a lattice $L$, and let $W=W^{L,\sigma}$ be the corresponding Cartan matrix. Recall that the \emph{Coxeter matrix} of $L$ with respect to $\sigma$ is defined to be $C=-W^{-1}W^\top$. Kl\'asz, Marczinzik, and Thomas \cite{KMT25} proved that $L$ is distributive if and only if there exist an upper-triangular matrix $U$ and a permutation matrix $P$ such that $C=PU$. In other words, $L$ is distributive if and only if $C$ has a Bruhat decomposition in which the first upper-triangular matrix is the identity matrix. Furthermore, the matrix $U$ in this case is an involution \cite{MTY24}. One can view this factorization as a ``canonical Bruhat factorization'' of the Coxeter matrix of a distributive lattice. This leads us to the following (somewhat vague) question.  

\begin{question}
Let $\sigma$ be a linear extension of a semidistributive lattice $L$. Is there a canonical Bruhat factorization of the Coxeter matrix of $L$ with respect to $\sigma$? 
\end{question} 

\subsection{Independence Posets} 
In \cref{sec:trim}, we defined vertebral linear extensions of trim lattices and proved that echelonmotion with respect to a vertebral linear extension agrees with rowmotion. Thomas and Williams \cite{TW20} introduced \emph{independence posets} as a generalization of trim lattices, and they explained how to define a bijective rowmotion operator on an independence poset. It is possible to define vertebral linear extensions of independence posets in a manner that directly mimics \cref{def:vertebral}; with this definition, every independence poset has at least one vertebral linear extension. This leads us naturally to the following question. 

\begin{question}
Let $\sigma$ be a vertebral linear extension of an independence poset $R$. Is $\Ech_\sigma$ the same as rowmotion on $R$? 
\end{question} 

\subsection{Modular Lattices} 
A lattice $L$ is \dfn{modular} if for all $a,b,x\in L$ such that $a\leq b$, we have $a\vee(x\wedge b)=(a\vee x)\wedge b$. Every distributive lattice is modular. There are also numerous other notable families of modular lattices. For instance, the lattice of normal subgroups of a finite group is modular. As another example, the lattice of submodules of a module over a ring is modular. 

In a seminal article, Dilworth \cite{Dilworth} proved that for every modular lattice $L$ and every nonnegative integer $k$, the number of elements of $L$ that are covered by $k$ elements equals the number of elements of $L$ that cover $k$ elements. That is, 
\[\left|\{x\in L:|\Cov_L^\uparrow(x)|=k\}\right|=\left|\{x\in L:|\Cov_L^\downarrow(x)|=k\}\right|.\]
The following conjecture states that for any linear extension $\sigma$ of $L$, echelonmotion with respect to $\sigma$ explicitly realizes Dilworth's theorem. 

\begin{conjecture}\label{conj:modular} 
Let $\sigma$ be a linear extension of a modular lattice $L$. For every $x\in L$, we have 
\[\left|\Cov_L^{\uparrow}(\Ech_\sigma(x))\right|=\left|\Cov_L^{\downarrow}(x)\right|.\] 
\end{conjecture}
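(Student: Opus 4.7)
The plan is to identify both $|\Cov_L^\downarrow(x)|$ and $|\Cov_L^\uparrow(\Ech_\sigma(x))|$ with ranks (or nullities) of explicit submatrices of the Cartan matrix $W = W^{L,\sigma}$, and then use the Bruhat rank conditions of \eqref{eq:ranks} to force equality. Write $j = \sigma(x)$ and $i = \sigma(y)$ for $y = \Ech_\sigma(x)$, so that $P^{L,\sigma}_{i,j} = 1$. The four rank equalities around position $(i,j)$ should be leveraged to extract $|\Cov_L^\downarrow(x)|$ from column $j$ of $W$ and $|\Cov_L^\uparrow(y)|$ from row $i$, after which the Bruhat constraint forces the two cover counts to agree.

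As a sanity check I would first verify the conjecture in the distributive case, where \cite{KMT25} gives $\Ech_\sigma = \row_L$: for $L = J(Q)$, the lower covers of $I$ biject with $\max(I)$, while the upper covers of $\row_L(I) = Q \setminus \nabla_Q(\max(I))$ biject with $\min(\nabla_Q(\max(I))) = \max(I)$, so both counts equal $|\max(I)|$. Passing to general modular $L$, I would exploit that $[\popdown_L(x), x]$ is a finite complemented modular lattice whose coatoms are exactly $\Cov_L^\downarrow(x)$. Since any finite complemented modular lattice has nonzero M\"obius function (it decomposes as a product of projective geometries and chains, each with nonzero $\mu$), the hypothesis on $\mu_L(\popdown_L(x), x)$ in \cref{cor:alpha} is automatic for every modular $L$. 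The other hypothesis of \cref{cor:alpha}---bijectivity of $x \mapsto \max_\sigma(\Odown_L(x))$---fails for non-semidistributive modular lattices such as $M_3$, so I would replace that ``unique maximum'' condition with a \emph{dimension count}: the space of maps $\rr\colon \Pre_\sigma(x)\to\CC$ satisfying the hypotheses of \cref{prop:new} should have dimension equal to $|\Cov_L^\downarrow(x)|$, and dually on row $i$ one obtains $|\Cov_L^\uparrow(y)|$, so that the Bruhat rank equalities force the two cover counts to coincide.

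The main obstacle will be extracting $|\Cov_L^\downarrow(x)|$ as a linear-algebraic invariant of column $j$ of $W$ in a way that is stable across all choices of $\sigma$. In a modular lattice the M\"obius function of $[\popdown_L(x), x]$ can be arbitrarily large (for instance $\mu = 2$ for $M_3$, much larger for subspace lattices of $\mathbb{F}_q^n$), so the count is not visible in a single entry of $W^{-1}$; rather it should emerge as the rank of a ``local coboundary'' assembled from the columns of $W$ indexed by $[\popdown_L(x), x]$, and this rank must be shown to govern the Bruhat rank jump at $(i,j)$. To calibrate the correct invariant I would first compute $\Ech_\sigma$ directly on several small non-semidistributive modular test cases---$M_n$ for small $n$, the subspace lattice of $\mathbb{F}_2^3$, and the normal subgroup lattice of a small non-abelian $p$-group---identify which linear-algebraic statistic of column $j$ is $\sigma$-independent, and then attempt an induction on the length of the longest chain in $L$, using chains (where the conjecture is immediate) as the base case.
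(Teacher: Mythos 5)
This statement is \cref{conj:modular}, and the paper does not prove it: it is presented as an open conjecture, supported only by computational checks (all modular lattices with at most $9$ elements, plus random linear extensions of subspace lattices of $\mathbb{F}_q^d$ for small $q,d$). So there is no paper proof to compare your attempt against; the authors themselves do not have one.

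Regarding your plan itself: you correctly verify the distributive base case via $\Ech_\sigma = \row_L$ and the bijection between lower covers of $I$ and $\max(I)$, and your observation that $[\popdown_L(x),x]$ is a complemented modular lattice with nonvanishing M\"obius function is sound and useful (it shows the hypothesis of \cref{cor:alpha} on $\mu_L(\popdown_L(x),x)$ is automatic for modular $L$). But the heart of your argument is absent. You propose to replace the ``unique maximum'' route through \cref{cor:alpha} (which indeed fails for $M_3$) with a ``dimension count,'' claiming that the space of maps $\rr\colon\Pre_\sigma(x)\to\CC$ satisfying the hypotheses of \cref{prop:new} has dimension $|\Cov_L^\downarrow(x)|$; but those hypotheses include the open condition $\rr(x)\neq 0$, so this is not a vector space, and after one drops that condition the relevant dimension is a nullity of a lower-left submatrix of $W$ whose shape, and hence rank, depends heavily on $\sigma$ --- exactly the $\sigma$-dependence you acknowledge as ``the main obstacle.'' You also note that $\mu_L(\popdown_L(x),x)$ is not the right invariant (already for $M_3$ it is $2$ while there are $3$ atoms, and it scales like powers of $q$ for subspace lattices), but you do not supply a replacement; the proposal instead says you would ``calibrate the correct invariant'' from small examples and ``attempt an induction.'' That is a reasonable research program, but it is not a proof, and the critical step --- producing a $\sigma$-independent linear-algebraic statistic of column $\sigma(x)$ of $W$ that equals $|\Cov_L^\downarrow(x)|$ and showing that the Bruhat rank jump at $(\sigma(\Ech_\sigma(x)),\sigma(x))$ forces the dual count to agree --- remains a genuinely open gap, consistent with the fact that the authors left the statement as a conjecture.
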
 

We have tested \cref{conj:modular} for all modular lattices with at most $9$ elements. For each prime power $q\leq 4$ and each positive integer $d\leq 3$, we have tested the conjecture for 200 random linear extensions of the lattice of subspaces of $\mathbb{F}_q^d$. 

\subsection{Beyond Linear Extensions} 
Let $R$ be an $n$-element poset. The article \cite{IM} considers bijections $\sigma\colon R\to[n]$ that need not be linear extensions. Given any such bijection, one can define the Cartan matrix $W^{R,\sigma}$ and the echelonmotion map $\Ech_\sigma\colon R\to R$ just as in \cref{def:echelonmotion}. While we have focused exclusively on linear extensions, it could be fruitful to explore this more general setting.

\section*{Acknowledgments} 
This work began at the 2025 workshop on Lattice Theory at the Banff International Research Station. We thank BIRS for the excellent working conditions that they provided. We thank the other participants of the workshop for their insights. We are particularly grateful to Emily Barnard, Cesar Ceballos, and Osamu Iyama for co-organizing the workshop and to Elise Catania, Cl\'ement Chenevi\`ere, Jessica Striker, Christian Stump, and Emine Y\i{}ld\i{}r\i{}m for several helpful conversations. 

Colin Defant was supported by the National Science Foundation under Award No.\ 2201907 and by a Benjamin Peirce Fellowship at Harvard University. Adrien Segovia was supported by NSERC Discovery Grants RGPIN-2022-03960 and RGPIN-2024-04465. 
David Speyer was partially supported by the National Science Foundation under Award No.\ 2246570.
Hugh Thomas was partially supported by NSERC Discovery Grant RGPIN-2022-03960 and the Canada Research Chairs program, grant number CRC-2021-00120. Nathan Williams was partially supported by the National Science Foundation under Award No.\ 2246877.

\bibliographystyle{alpha}
\bibliography{ref}

\end{document}